\titleformat*{\section}{\large\bfseries\filcenter}
\titleformat*{\subsection}{\normalsize\bfseries}
\newcommand{\cd}{\mathcal{D}}
\newcommand{\ce}{\mathcal{E}}
\newcommand{\cf}{\mathcal{F}}
\newcommand{\ci}{\mathcal{J}}
\newcommand{\cl}{\mathcal{L}}
\newcommand{\cm}{\mathcal{M}}
\newcommand{\co}{\mathcal{O}}
\newcommand{\cq}{\mathcal{Q}}
\newcommand{\cs}{\mathcal{S}}
\newcommand{\cz}{\mathcal{Z}}
\newcommand{\ba}{\mathbf{A}} 
\newcommand{\bp}{\mathbf{P}} 
\newcommand{\bz}{\mathbf{Z}} 
\newcommand{\A}{\mathrm{A}}
\newcommand{\C}{\mathrm{C}}  
\newcommand{\E}{\mathrm{E}} 
\newcommand{\F}{\mathrm{F}} 
\let\ho\H
\renewcommand{\H}{\mathrm{H}}
\renewcommand{\L}{\mathrm{L}}
\newcommand{\M}{\mathrm{M}}
\newcommand{\R}{\mathrm{R}} 
\let\SS\S
\renewcommand{\S}{\mathrm{S}}
\newcommand{\T}{\mathrm{T}} 
\newcommand{\U}{\mathrm{U}} 
\newcommand{\V}{\mathrm{V}} 
\newcommand{\X}{\mathrm{X}} 
\newcommand{\Z}{\mathrm{Z}} 
\newcommand{\isom}{ \ \rotatebox[origin=c]{90}{$\thicksim$}}
\newcommand{\Aut}{\operatorname{Aut}}
\newcommand{\Ext}{\operatorname{Ext}}
\newcommand{\SExt}{\mathcal{E}xt}
\newcommand{\Hom}{\operatorname{Hom}}
\newcommand{\GL}{\operatorname{GL}}
\newcommand{\SEnd}{\mathcal{E}nd}
\newcommand{\SHom}{\mathcal{H}om}
\newcommand{\Quot}{\operatorname{Quot}}
\newcommand{\vir}{\operatorname{vir}}
\newcommand{\git}{\mathbin{/\mkern-6mu/}}
\theoremstyle{plain}
\newtheorem{theorem}{Theorem}[section]
\newtheorem{theorem*}{Theorem}[]
\newtheorem{proposition}{Proposition}[section]
\newtheorem{lemma}{Lemma}[section]
\newtheorem{corollary}{Corollary}[section]
\newtheorem{conjecture}{Conjecture}[section]
\newtheorem{conjecture*}{Conjecture}
\theoremstyle{definition}
\newtheorem{remark}{Remark}[section]
\newtheorem{example}{Example}[section]
\begin{document}

\title{\textbf{On the Quot scheme \texorpdfstring{$\bm{\mathrm{Quot}^{l}_{\mathrm{S}}(\ce)}$}{Quot}}}
\author{SAMUEL STARK}
\date{}
\maketitle

We study the geometry of the Quot scheme $\Quot^{l}_{\S}(\ce)$ of length $l$ coherent sheaf 
quotients of a locally free sheaf $\ce$ on a smooth projective surface $\S$. 
In particular, we investigate the nature of its singularities, its intersection theory, 
and the cohomology of sheaves on $\Quot^l_{\S}(\ce)$.

\section{Introduction}

Let $\ce$ be a locally free sheaf of rank $r$ on a smooth projective surface $\S$, 
and consider the Quot scheme $\Quot^{l}_{\S}(\ce)$ of length $l$ coherent sheaf quotients of $\ce$. 
Defined by Grothendieck in his seminal paper \cite{Gro}, 
these schemes are fundamental objects of study in algebraic geometry. 
Grothendieck related the schemes $\Quot^{l}_{\S}(\ce)$ to the Chow scheme $\S^{(l)}$ of effective $0$-cycles of degree $l$ on $\S$. 
The Hilbert scheme of points $\S^{[l]}=\Quot^{l}_{\S}(\co)$ is smooth, and in fact a resolution of singularities of $\S^{(l)}$; 
the geometry of $\S^{[l]}$ is very rich and has been studied intensely \cite{Bea, ElS, EGL, Fog, Goe, Nak, MOP1}. 
In contrast, the schemes $\Quot^{l}_{\S}(\ce)$ are singular for $l, r\geqslant 2$ and very little is known about them, 
despite the fact that they occur naturally in the study of moduli spaces of sheaves on $\S$ \cite{Bar, GiL, Yos}. 
In this paper, we study some basic aspects of the geometry of these schemes. 

In \SS 2, we set forth our notations, recall some results of Grothendieck \cite{Gro}, and in particular basic properties of the schemes $\Quot^{l}_{\S}(\ce)$, such as
\begin{enumerate}[label=(\roman*)]
\item a quotient sheaf $\ce^{''}$ of $\ce$ induces a closed immersion \label{one} $\iota:\Quot^{l}_{\S}(\ce^{''})\rightarrow\Quot^{l}_{\S}(\ce)$;
\item an invertible sheaf $\cl$ on $\S$ induces an isomorphism \label{two} $\Quot^{l}_{\S}(\ce)\xrightarrow{\sim}\Quot^{l}_{\S}(\ce\otimes\cl)$;
\item the automorphism group of $\ce$ acts naturally on $\Quot^{l}_{\S}(\ce)$. \label{three}
\end{enumerate}
We then introduce the tautological sheaves $\cf^{[l]}$ on $\Quot^{l}_{\S}(\ce)$, 
and the section $s^{[l]}$ of $\cf^{[l]}$ induced by a section $s$ of $\ce\otimes\cf$. 
We discuss some properties of these objects, and especially their compatibility with \ref{one} -- \ref{three}. 
We include a brief description of the obstruction theory of $\Quot^l_{\S}(\ce)$, and the associated virtual class $[\Quot^l_{\S}(\ce)]^{\vir}$.

In \SS 3, we study the local properties of $\Quot^{l}_{\S}(\ce)$. 
\Cref{tangent} provides a formula for the embedding dimension of $\Quot^{l}_{\S}(\ce)$ at a given point. 
Based on Nakajima's ADHM  description of $\Quot^l_{\ba^{2}}(\co^{\oplus r})$, 
we state the following conjectural description of the singularities, which guides our inquiry.
\begin{conjecture*}
The scheme $\Quot^{l}_{\S}(\ce)$ has rational singularities.
\end{conjecture*}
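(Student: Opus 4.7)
The plan is to reduce the conjecture to a statement about the commuting variety of pairs of $l\times l$ matrices, via an \'etale-local description of $\Quot^l_\S(\ce)$ and the ADHM construction, and then to conclude by Boutot's theorem on reductive quotients. This approach is inspired by the $l = 2$ case treated in this paper; the point at which the argument becomes conditional---rational singularities of the commuting variety---is itself a deep open question.

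First, rational singularities can be tested \'etale-locally, so it suffices to understand an \'etale neighbourhood of an arbitrary closed point $[\ce\twoheadrightarrow\cf]\in\Quot^l_\S(\ce)$. A standard deformation-theoretic analysis, parallel to the well-known local description of $\S^{[l]}$, should show that such a neighbourhood decomposes as a product $\prod_i\Quot^{l_i}_{\ba^2}(\co^{\oplus r}_{\ba^2})$ indexed by the points $x_i\in\Supp(\cf)$ with $l_i = \operatorname{length}_{x_i}\cf$, possibly up to a smooth factor encoding variation of the support. Since rational singularities are stable under products and smooth base change, the conjecture reduces to the universal affine model $\S = \ba^2$, $\ce = \co^{\oplus r}$.

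For this local model I would invoke the ADHM description. Writing $V = k^l$ and $W = k^r$,
\[
\Quot^l_{\ba^2}(\co^{\oplus r}_{\ba^2}) = \bigl\{(B_1, B_2, i)\in\operatorname{End}(V)^{\oplus 2}\oplus\Hom(W, V) : [B_1, B_2] = 0,\ i(W)\text{ generates }V\bigr\}\big/\GL(V),
\]
a GIT quotient by the reductive group $\GL(V)$ of a $\GL(V)$-stable open subset of the affine variety
\[
X := \bigl\{(B_1, B_2)\in\operatorname{End}(V)^{\oplus 2} : [B_1, B_2] = 0\bigr\}\times\Hom(W, V).
\]
By Boutot's theorem---a good quotient of a variety with rational singularities by a linearly reductive group again has rational singularities---it would suffice to show $X$ has rational singularities. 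As $\Hom(W, V)$ is an affine space, this reduces to the analogous statement for the commuting variety $C_l := \{[B_1, B_2] = 0\}\subset\operatorname{End}(V)^{\oplus 2}$.

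The principal obstacle is therefore rational singularities of the commuting variety $C_l$. Although $C_l$ is classically known to be irreducible and its normality has been established, Cohen--Macaulayness and the rationality of its singularities remain long-standing open problems, confirmed only in low-rank cases. Any proof of rational singularities for $C_l$---via an explicit Springer-type resolution, a Frobenius-splitting argument, or some new input---would, by the above reduction, settle the conjecture. Failing this, an alternative would be to construct an explicit modular resolution of $\Quot^l_\S(\ce)$ generalising the $l = 2$ construction of this paper and to verify the requisite higher-direct-image vanishing directly; controlling such a resolution for general $l$ and $r$ appears, however, to demand genuinely new ideas.
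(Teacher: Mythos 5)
The statement you were asked to prove is stated in the paper as a \emph{conjecture} (\Cref{rationalsingularities}), and the paper does not prove it: it only establishes the case $l=2$, by identifying the relevant local model $\C(\mathfrak{sl}_2)$ with the generic determinantal variety of $3\times 2$ matrices of rank $\leqslant 1$ (equivalently with $\ba^5/\bg_m$, to which Boutot's theorem applies). Your proposal is therefore necessarily not a proof, and to your credit you say so explicitly. What you \emph{have} written is essentially the paper's own reduction, carried out in the discussion following the conjecture: trivialise $\ce$ near the support of the quotient (\Cref{open}), pass to $\ba^2$ by an étale map injective on the support (inducing an étale map of Quot schemes), and then use the ADHM description (\Cref{ADHM}) to transfer the question to the commuting scheme $\C(\mathfrak{gl}_l)$. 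Both you and the paper end at the same open problem.

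Two smaller points. First, at the quotient step you invoke Boutot's theorem, but this is both more and less than what is needed: the Quot scheme is the quotient of the \emph{stable} locus $\U$, on which $\GL(\V)$ acts freely, so $\U\rightarrow\U/\GL(\V)$ is a principal bundle and rational singularities pass back and forth without any appeal to Boutot (this also gives the converse implication, which Boutot does not). Boutot enters the paper only in the $l=2$ case, where $\C(\mathfrak{sl}_2)$ is exhibited as an honest affine quotient of a \emph{smooth} variety. Second, your assertion that normality of the commuting variety ``has been established'' is incorrect as far as the paper (and the literature it cites) is concerned: irreducibility is classical (Motzkin--Taussky), but normality, Cohen--Macaulayness, and even reducedness of $\C(\mathfrak{gl}_l)$ are open in general; only $l\leqslant 3$ is settled (Thompson), and even there rational singularities are not known for $l=3$. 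Since Boutot's theorem presupposes normality upstairs, this gap is not cosmetic for your formulation.
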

We show that this conjecture holds in the first non-trivial case $l=2$. A weaker (Cohen-Macaulay) form of the above conjecture is implied by a long-standing conjecture on commuting schemes. In the $l=2$ case, we exhibit a modular resolution of singularities of the singular scheme $\Quot^{2}_{\S}(\ce)$, relating $\Quot^{2}_{\S}(\ce)$ to the Hilbert scheme $\bp(\ce)^{[2]}$ (\Cref{lequals2}).
\begin{theorem*}
	(i) There exists a morphism of schemes
	\begin{equation*}
		\rho^{2}:\bp(\ce)^{[2]}\rightarrow\Quot^2_{\S}(\ce)
	\end{equation*}
	taking a length two subscheme $\Z$ of $\bp(\ce)$ to $\ce=p_{\ast} \co(1)\rightarrow p_{\ast} \co_{\Z}(1)$. \\
	(ii) The morphism $\rho^{2}$ is a resolution of singularities of $\Quot^2_{\S}(\ce)$. \\
	(iii) If $\cf$ is a locally free sheaf on $\S$, then
	\begin{equation*}
		\rho^{2\ast}\cf^{[2]}=\cf(1)^{[2]}.
	\end{equation*}
\end{theorem*}

Here $\co(1)$ denotes the canonical invertible sheaf associated to $p:\bp(\ce)\rightarrow\S$.

In \SS 4, we study the intersection theory of $\Quot^{l}_{\S}(\ce)$. 
In the intersection theory of Hilbert schemes of points, 
the Chern classes of tautological sheaves play an important role \cite{EGL, Leh}. 
In the higher rank case, we prove that the morphism $\iota$ of \ref{one} yields relations in intersection theory (\Cref{relations}).
\begin{theorem*}
Let $\ce\rightarrow\ce^{''}$ be a locally free quotient with kernel $\ce^{'}$, 
and consider the section $s\in\H^{0}(\S, \ce\otimes\ce^{'\vee})$ given by the inclusion $\ce'\rightarrow\ce$. 
The induced section $s^{[l]}\in\H^0(\Quot^l_{\S}(\ce), \ce^{'\vee[l]})$ is regular, 
and its zero scheme is $\Quot^l_{\S}(\ce^{''})\subset\Quot^l_{\S}(\ce)$. 
In particular,
\begin{equation*}
\iota_\ast[\Quot^l_{\S}(\ce^{''})] = e(\ce^{'\vee[l]}) \cap [\Quot^l_{\S}(\ce)]
\end{equation*}
holds in $\A_{\ast}(\Quot^l_{\S}(\ce))$. Moreover, we have the vanishing
\begin{equation*}
e(\ce^{\vee[l]}) = 0 \quad in \quad \A_{\ast}(\Quot^l_{\S}(\ce)).
\end{equation*}
\end{theorem*}

This result can be thought of as a higher $l$ generalisation of well-known facts about $\bp(\ce)=\Quot^{1}_{\S}(\ce)$; 
in particular, $e(\ce^{\vee[1]})=0$ is precisely Grothendieck's relation
\begin{equation*}
\sum_{k=0}^{r}(-1)^{r-k}c_{1}(\co(1))^{k}c_{r-k}(\ce)=0.
\end{equation*}
We also prove the corresponding result for the virtual class (\Cref{virclass}).
We then apply \Cref{relations} to the study of tautological integrals, 
i.e. integrals of Chern classes of tautological sheaves. 
In the $r=1$ case, such integrals have attracted a great deal of attention \cite{Bea, EGL, ElS, MOP1, MOP2, Voi}. 
In a recent paper \cite{OP}, Oprea and Pandharipande study virtual tautological integrals over $\Quot^{l}_{\S}(\co^{\oplus r})$; 
using \Cref{virclass}, the results of \cite{OP} were extended to $\Quot^{l}_{\S}(\ce)$ for arbitrary $\ce$ \cite{Sta}. 
(The virtual integrals are much easier to study, but contain less information, and their geometric meaning is obscure.)

In the study of tautological integrals over $\S^{[l]}$, the Segre integrals have been computed in the papers \cite{MOP1, MOP2}. 
In the first non-trivial case $l=2$, the Segre integral is given by Severi's double point formula \cite{Sev}. 
We extend the latter formula to higher rank (\Cref{higherseveri}):
\begin{theorem*}
For any invertible sheaf $\cl$ on $\S$, we have
\begin{align*}
2\int_{\Quot^{2}_{\S}(\ce)} s_{2r+2}(\cl^{[2]}) &= -\frac{r+2}{2}\Delta(\ce) +  s_{2}(\ce\otimes\cl)^{2}-(2r+3)(r+1)s_{2}(\ce\otimes\cl) \\
&+ \frac{1}{6}(2r+3)(r+2)(r+1)s_{1}(\ce\otimes\cl)s_{1}(\S)-\binom{r+3}{4}s_{2}(\S).
\end{align*}
\end{theorem*}
The particular nature of this formula, featuring the Segre classes of $\ce\otimes\cl$ and the discriminant $\Delta(\ce) = c_{2}(\SEnd(\ce))$ 
(which is well-known in the study of stable sheaves \cite{Bog, HuL}), reflects property \ref{two}. 
The proof uses our modular resolution of singularities of $\Quot^{2}_{\S}(\ce)$ and an explicit computation on the Hilbert scheme $\bp(\ce)^{[2]}$.

In the final \SS 5, we study the cohomology of sheaves on $\Quot^{l}_{\S}(\ce)$. 
The cohomology of the structure sheaf of $\S^{[l]}$ was first studied by Fogarty \cite{Fog}. 
For $r\geqslant 2$, we observe (\Cref{coh}) that if the singularities of $\Quot^{l}_{\S}(\ce)$ are rational, 
then
\begin{equation*}
\H^{\ast}(\Quot^{l}_{\S}(\ce), \co_{\Quot^{l}_{\S}(\ce)})\xrightarrow{\sim}\S^{l}\H^{\ast}(\S, \co_{\S}).
\end{equation*}
(This is indicative of the importance of \Cref{singularities}.)
As \Cref{singularities} holds for $l=2$, the above isomorphism holds in this case as well. 
We then study the cohomology of tautological sheaves. 
Again, this is a question which is well-studied in the rank $r=1$ case \cite{Arb, EGL, Kru, Sca}.  
We conjecture (\Cref{tautcoh}) that for a locally free sheaf $\cf$ on $\S$, there is an isomorphism 
\begin{equation*}
\H^{\ast}(\Quot^{l}_{\S}(\ce), \cf^{[l]})\xrightarrow{\sim}\H^{\ast}(\S, \ce\otimes\cf)\otimes\S^{l-1}\H^{\ast}(\S, \co_{\S}).
\end{equation*}
Using \Cref{lequals2}, we show that this conjecture holds in the first non-trivial case $l=2$ (\Cref{tautcoh2}). 
By considering the Koszul resolution associated to the section $s^{[l]}$ induced by a locally free quotient $\ce\rightarrow\ce^{''}$ on $\S$, 
we obtain a spectral sequence (\Cref{spectrals}), 
relating the cohomology of Schur functors of tautological sheaves on $\Quot^{l}_{\S}(\ce)$ to the cohomology of 
Schur functors of tautological sheaves on $\Quot^{l}_{\S}(\ce^{''})$.

\subsection*{Conventions}

We only consider seperated complex schemes of finite type, and only coherent sheaves. For any sheaf $\cf$ on a scheme $\S$, we use the Grothendieck convention for the projectivisation $\bp(\cf)$; if $\cf$ is locally free, we view the pullback map $\A_{\ast}(\S)\rightarrow\A_{\ast}(\bp(\cf))$ on Chow groups as an inclusion. 
Given sheaves $\ce$ and $\cf$ on a scheme $\S$, we write
\begin{equation*}
\chi(\S, \ce, \cf) = \sum_{i} (-1)^{i} \dim\Ext^{i}_{\S}(\ce, \cf)
\end{equation*}
for the Euler pairing, and in particular $\chi(\S,\cf)=\chi(\S, \co, \cf)$ for the Euler characteristic. Throughout this paper, we assume (unless stated otherwise) that $\ce$ is a locally free sheaf of rank $r$ on a smooth projective surface $\S$.

\section{Basic properties}\label{sec:basicp}

\subsection{Quot schemes}

Let $\ce$ be a coherent sheaf on a quasi-projective scheme $\S$, and $l$ a positive integer. 
We consider the Quot scheme $\Quot^{l}_{\S}(\ce)$ of zero-dimensional length $l$ quotients of $\ce$, 
introduced by Grothendieck in his foundational paper \cite{Gro}. 
Grothendieck proved that $\Quot^{l}_{\S}(\ce)$ exists as a fine moduli scheme; 
he showed that the scheme $\Quot^{l}_{\S}(\ce)$ is quasi-projective, and that it is projective if $\S$ is projective. 
The functor of points $h_{\Quot^{l}_{\S}(\ce)}$ can be described as follows: a morphism of schemes 
\begin{equation*}
f:\T\rightarrow\Quot^{l}_{\S}(\ce)
\end{equation*}
corresponds to a quotient 
\begin{equation*}
\ce\rightarrow\cq_{f} \quad \mathrm{on} \quad \S\times\T,
\end{equation*}
where $\cq_{f}$ is a flat family of length $l$ sheaves on $\S$ parametrised by $\T$; we denote by $\cs_{f}$ the kernel of $\ce\rightarrow\cq_{f}$. In particular, a point $q$ of $\Quot^{l}_{\S}(\ce)$ corresponds to a quotient $\ce\rightarrow\cq_{q}$ on $\S$, satisfying 
\begin{equation}
\dim\cq_{q}=0 \quad \mathrm{and} \quad h^{0}(\cq_{q})=l.
\end{equation}
If $g:\T^{'}\rightarrow\T$ is a morphism of schemes, the map
\begin{equation*}
h_{\Quot^{l}_{\S}(\ce)}(\T)\rightarrow h_{\Quot^{l}_{\S}(\ce)}(\T^{'})
\end{equation*}
takes the quotient $\ce\rightarrow\cq_{f}$ on $\S\times\T$ to its pullback along $1\times g:\S\times\T^{'}\rightarrow\S\times\T$. 
As $\cq_{f}$ and $\cs_{f}$ are flat over $\T$, we have the vanishing
\begin{equation}\label{eq:vanishing}
\L_{n}(1\times g)^{\ast}\cq_{f}=0 \quad \mathrm{and} \quad \L_{n}(1\times g)^{\ast}\cs_{f}=0 \quad (n\geqslant 1).
\end{equation}
We write $\cs$ for the kernel of the universal quotient
\begin{equation}\label{eq:universal}
\ce\rightarrow\cq
\end{equation}
on $\S\times\Quot^{l}_{\S}(\ce)$. If $f:\T\rightarrow\Quot^{l}_{\S}(\ce)$ is a morphism of schemes, then by definition
\begin{equation*}
\cq_{f}=(1\times f)^{\ast}\cq \quad \mathrm{and} \quad \cs_{f}=(1\times f)^{\ast}\cs.
\end{equation*}
We will frequently use the notation
\begin{equation*}
\pi:\S\times\Quot^{l}_{\S}(\ce)\rightarrow\Quot^{l}_{\S}(\ce)
\end{equation*}
for the projection to $\Quot^{l}_{\S}(\ce)$. 

The schemes $\Quot^{l}_{\S}(\ce)$ have the following fundamental properties \cite{Gro}.
\begin{proposition}\label{I-III}
(i)  A quotient $\ce\rightarrow\ce^{''}$ on $\S$ induces a closed embedding
\begin{equation*}
\Quot^{l}_{\S}(\ce^{''})\rightarrow\Quot^{l}_{\S}(\ce).
\end{equation*}
(ii) An invertible sheaf $\cl$ on $\S$ induces an isomorphism
\begin{equation*}
\Quot^{l}_{\S}(\ce)\xrightarrow{\sim} \Quot^{l}_{\S}(\ce\otimes\cl).
\end{equation*}
(iii) The automorphism group $\Aut(\ce)$ acts naturally on $\Quot^{l}_{\S}(\ce)$.
\end{proposition}

Moreover, as observed by Grothendieck \cite{Gro}, the schemes $\Quot^{l}_{\S}(\ce)$ admit a canonical morphism to the the symmetric product  $\S^{(l)}=\S^{l}/\mathfrak{S}_{l}$. We view 
$\S^{(l)}$ as the Chow variety of effective $0$-cycles of degree $l$, with its standard stratification
\begin{equation*}
\S^{(l)}=\coprod_{\lambda}\S^{(l)}_{\lambda}
\end{equation*}
indexed by the partitions $\lambda$ of $l$. Grothendieck's morphism 
\begin{equation}\label{eq:Grothendieck}
\mu:\Quot^l_{\S}(\ce)\rightarrow\S^{(l)}
\end{equation}
takes a point $q$ to the zero-cycle $\mu(q)$ defined by $\cq_{q}$. We can also view $\S^{(l)}$ as the moduli space of zero dimensional sheaves of length $l$, and $\mu$ as the forgetful morphism taking $q$ to $\cq_{q}$ \cite{HuL}. We will sometimes use the notation
\begin{equation*}
\Quot^{l}_{\S}(\ce)_{\lambda}=\mu^{-1}(\S^{(l)}_{\lambda}).
\end{equation*}

For future reference, we include the following auxiliary result. 
\begin{lemma}\label{locfree}
If $\ce$ is locally free of rank $r$, then $\Quot^{l}_{\S}(\ce)$ is locally isomorphic to $\Quot^{l}_{\S}(\co^{\oplus r})$.
\end{lemma}

\begin{proof}
It suffices to show that for any point  $q$ of $\Quot^l_{\S}(\ce)$, 
there is an open neighbourhood $\U\subset\S$ of the support of $\cq_{q}$ such that the restriction $\ce|_\U$ is trivial. 
As the support of $\cq_{q}$ is finite, we can find a hyperplane section $\H$ of $\S$ which is disjoint from it. 
The complement $\S-\H$ is affine; as locally free sheaves (of constant rank) over semi-local schemes are trivial (Lemma 1.4.4 of \cite{BrH}), 
we can obtain $\U$ by semi-localising  $\S-\H$ with respect to the support of $\cq_{q}$.
\end{proof}

If $\S$ is irreducible of dimension $\geqslant 3$, then it is well-known \cite{Iar} that the Hilbert scheme of points
\begin{equation*}
\S^{[l]} = \Quot^{l}_{\S}(\co)
\end{equation*}
is reducible for $l\gg 0$; in fact, $\S^{[l]}$ can even be non-reduced \cite{Jel}. For this reason, we will assume, in all that follows, that $\ce$ is a locally free sheaf of rank $r$ on a smooth projective surface $\S$. In this case,  $\Quot^{l}_{\S}(\ce)$ has the following basic properties.
\begin{theorem}\label{basic}
Let $\ce$ be a locally free sheaf of rank $r$ on a smooth surface $\S$. \\
(i) If $\S$ is irreducible, then $\Quot^{l}_{\S}(\ce)$ is irreducible of dimension $l(r+1)$. \\
(ii) The open subscheme $\Quot^{l}_{\S}(\ce)_{(1^{l})}$ is dense in $\Quot^{l}_{\S}(\ce)$. \\
(iii) The scheme $\Quot^{l}_{\S}(\ce)$ is birational to the symmetric product $\bp(\ce)^{(l)}$.
\end{theorem}
Part (i) is Corollary 2.6 of \cite{Reg}, while (ii) is a result of Gieseker and Li \cite{GiL}. As for (iii), there is a canonical isomorphism $\Quot^{1}_{\S}(\ce)\xrightarrow{\sim}\bp(\ce)$ by \cite{Kle} (see also \Cref{lequals1}), and one can consider the morphism of schemes
\begin{equation*}
\Quot^{1}_{\S}(\ce)^{(l)}\rightarrow\S^{(l)},
\end{equation*}
induced by the the $l$-fold product of (\ref{eq:Grothendieck}). Writing $\Quot^{1}_{\S}(\ce)^{(l)}_{(1^{l})}$ for the preimage of $\S^{(l)}_{(1^{l})}$ under the latter morphism, (iii) comes down to the observation that the open subscheme $\Quot^{l}_{\S}(\ce)_{(1^{l})}$ of $\Quot^{1}_{\S}(\ce)^{(l)}$ is isomorphic to $\Quot^{1}_{\S}(\ce)^{(l)}_{(1^{l})}$.
\begin{remark}
If $\S_{1}, \ldots, \S_{n}$ are the connected components of $\S$, then
\begin{equation*}
\Quot^{l}_{\S}(\ce)=\coprod_{l_{1}+\cdots+l_{n}=l} \prod_{i=1}^{n}\Quot^{l_{i}}_{\S_{i}}(\ce\vert_{\S_{i}}).
\end{equation*}
By \Cref{basic} (i), this is the decomposition of $\Quot^{l}_{\S}(\ce)$ into irreducible components. 
In particular, the Quot scheme $\Quot^{l}_{\S}(\ce)$ is of pure dimension $l(r+1)$.
\end{remark}

A particularly nice, inductive proof of part (i) of \Cref{basic} was given by Ellingsrud and Lehn \cite{ElL}. 
The induction step is obtained by considering the flag Quot scheme 
\begin{equation}\label{eq:flag}
	\begin{tikzcd}
		& \Quot^{l, l+1}_{\S}(\ce) \arrow[rd] \arrow[ld] &  \\
		\S\times\Quot^{l}_{\S}(\ce) & & \Quot^{l+1}_{\S}(\ce),
	\end{tikzcd}
\end{equation}
where the morphism
\begin{equation*}
\Quot^{l,l+1}_{\S}(\ce) \rightarrow \S\times\Quot^{l}_{\S}(\ce)
\end{equation*}
can be identified with the projectivisation of the universal subsheaf $\cs$ on $\S\times\Quot^{l}_{\S}(\ce)$; 
Ellingsrud and Lehn prove that $\bp(\cs)$ is irreducible.

\begin{remark}\label{aux}
For any point $q$ of $\Quot^{l}_{\S}(\ce)$, the sheaf $\cs_{q}$ is torsion-free, 
and $\ce$ is its double dual. Conversely, by considering the double dual, 
we see that every torsion-free sheaf on $\S$ is of the form  $\cs_{q}$ for some point $q$ of $\Quot^{l}_{\S}(\ce)$ for some $\ce$ and $l$. 
\end{remark}
\subsection{Tautological sheaves}

Let $\cf$ be a locally free sheaf on $\S$. The tautological sheaf on $\Quot^{l}_{\S}(\ce)$ induced by $\cf$ is defined by 
\begin{equation*}
\cf^{[l]}=\pi_{\ast}(\cq\otimes\cf).
\end{equation*}
These sheaves are intrinsic to $\Quot^{l}_{\S}(\ce)$. For us, a key property is their compatibility with base change. 
Consider a morphism of schemes $f:\T\rightarrow\Quot^{l}_{\S}(\ce)$ and the induced Cartesian diagram
\begin{equation*}
	\begin{tikzcd}
		\S\times\T \arrow[r] \arrow[d, "\pi_{\T}", swap] & \S\times\Quot^l_{\S}(\ce) \arrow[d, "\pi"]   \\
		\T \arrow[r] & \Quot^l_{\S}(\ce).
	\end{tikzcd}
\end{equation*}

\begin{lemma}\label{basechange}
For every morphism of schemes $f:\T\rightarrow\Quot^{l}_{\S}(\ce)$, the base change map
\begin{equation*}
f^{\ast}\cf^{[l]}\rightarrow\pi_{\T\ast}(\cq_{f}\otimes\cf)
\end{equation*}
is an isomorphism, and the sheaf $\cf^{[l]}$ is locally free.
\end{lemma}
\begin{proof}
For every point $q$ of $\Quot^{l}_{\S}(\ce)$, $\dim\cq_{q}=0$ implies the vanishing
\begin{equation*}
\H^{1}(\S, \cq_{q}\otimes\cf)=0.
\end{equation*}
As $\cf$ is locally free, the sheaf $\cf\otimes\cq$ on $\S\times\Quot^l_{\S}(\ce)$ is flat over $\Quot^l_{\S}(\ce)$, and the conclusion follows from Grothendieck's theorem on cohomology and base change.
\end{proof}
\begin{corollary}
The fibre of $\cf^{[l]}$ over a point $q$ is
\begin{equation*}
\cf^{[l]}(q)=\H^0(\S, \cq_{q}\otimes\cf).
\end{equation*}
\end{corollary}

As a consequence of \Cref{basechange}, we obtain the compatibility of tautological sheaves with the three properties (i)-(iii) of \Cref{I-III}.

\begin{proposition}\label{tautcomp}
(i) If $\ce\rightarrow\ce^{''}$ is a quotient on $\S$, then the tautological sheaf $\cf^{[l]}$ on $\Quot^{l}_{\S}(\ce)$ restricts to the tautological sheaf $\cf^{[l]}$ on $\Quot^{l}_{\S}(\ce^{''})\subset\Quot^{l}_{\S}(\ce)$. \\
(ii) If $\cl$ is an invertible sheaf on $\S$, then the tautological sheaf $\cf^{[l]}$ on $\Quot^l_{\S}(\ce\otimes\cl)$ corresponds to the tautological sheaf $(\cf\otimes\cl)^{[l]}$ on $\Quot^l_{\S}(\ce)$. \\
(iii) The sheaf $\cf^{[l]}$ admits a canonical $\Aut(\ce)$-linearisation.
\end{proposition}

\begin{proof}
Parts (i) and (ii) are immediate consequences of \Cref{basechange}. As for (iii), consider the morphism given by the action of $\Aut(\ce)$
\begin{equation*}
\sigma:\Aut(\ce)\times\Quot^{l}_{\S}(\ce)\rightarrow\Quot^{l}_{\S}(\ce),
\end{equation*} 
and the projection $\tau$ to the second factor. Applying \Cref{basechange} to both maps yields, by composition of the corresponding base change maps, an isomorphism 
\begin{equation*}
\sigma^{\ast}\cf^{[l]}\xrightarrow{\sim}\tau^{\ast}\cf^{[l]}
\end{equation*}
of sheaves on $\Aut(\ce)\times\Quot^{l}_{\S}(\ce)$. It is tedious, but straightforward to show that this isomorphism satisfies the cocycle condition.
\end{proof}

By taking the pushforward $\pi_{\ast}$ of the universal quotient (\ref{eq:universal}), we obtain a morphism of coherent sheaves
\begin{equation*}
\H^{0}(\S, \ce\otimes\cf)\otimes\co_{\Quot^{l}_{\S}(\ce)}\rightarrow\cf^{[l]}
\end{equation*}
on $\Quot^{l}_{\S}(\ce)$, and in particular a map on global sections
\begin{equation}\label{eq:global}
\H^{0}(\S, \ce\otimes\cf)\rightarrow\H^{0}(\Quot^{l}_{\S}(\ce), \cf^{[l]}).
\end{equation}
We denote by $s^{[l]}$ the image of $s\in\H^{0}(\S, \ce\otimes\cf)$ under the latter map; the value of $s^{[l]}$ at a given point $q$ of $\Quot^{l}_{\S}(\ce)$ is given by
\begin{equation*}
\co\xrightarrow{s}\ce\otimes\cf\rightarrow\cq_{q}\otimes\cf.
\end{equation*}
The map (\ref{eq:global}) is compatible with \Cref{I-III} (i) in the following sense. 

\begin{lemma}\label{sectiondiag}
For every quotient $\ce\rightarrow\ce^{''}$ on $\S$, the diagram
\begin{equation*}
	\begin{tikzcd}
		\H^{0}(\S,\ce\otimes\cf) \arrow[r] \arrow[d] & \H^{0}(\Quot^{l}_{\S}(\ce), \cf^{[l]}) \arrow[d, "\iota^{\ast}"]  \\
		\H^{0}(\S,\ce^{''}\otimes\cf) \arrow[r] & \H^{0}(\Quot^{l}_{\S}(\ce^{''}), \cf^{[l]})
	\end{tikzcd}
\end{equation*}
is commutative.
\end{lemma}

\begin{proof}
On $\S\times\Quot^{l}_{\S}(\ce)$ we have a commutative diagram
\begin{equation}\label{eq:comm1}
	\begin{tikzcd}
		\ce\otimes\cf \arrow[r] \arrow[d] & \cq\otimes\cf \arrow[d]  \\
		(1\times\iota)_{\ast}(1\times\iota)^{\ast}(\ce\otimes\cf) \arrow[r] & (1\times\iota)_{\ast}(1\times\iota)^{\ast}(\cq\otimes\cf),
	\end{tikzcd}
\end{equation}
where the horizontal morphisms are induced by the universal quotient (\ref{eq:universal}), and the vertical morphisms are given by restriction. On the other hand, by definition of $\iota$, we have a commutative diagram
\begin{equation}\label{eq:comm2}
\begin{tikzcd}
	(1\times\iota)^{\ast}(\ce\otimes\cf) \arrow[r] \arrow[d] & (1\times\iota)^{\ast}(\cq\otimes\cf) \arrow[d, "\isom"]  \\
	(1\times\iota)^{\ast}(\ce^{''}\otimes\cf) \arrow[r] & \cq^{''}\otimes\cf
\end{tikzcd}
\end{equation}
on $\S\times\Quot^{l}_{\S}(\ce^{''})$, where the horizontal maps are induced by the universal quotients associated to $\Quot^{l}_{\S}(\ce)$ and $\Quot^{l}_{\S}(\ce^{''})$, respectively. It remains to apply $(1\times\iota)_{\ast}$ to (\ref{eq:comm2}), combine the resulting diagram with (\ref{eq:comm1}), and take global sections.
\end{proof}

The identification of $\Quot^1_{\S}(\ce)$ with the projectivisation $\bp(\ce)$ of $\ce$ is well-known \cite{Kle}; we will need a more detailed description of the identification. Let $p:\bp(\ce)\rightarrow\S$ be the projection, and $\co(1)$ the canonical invertible sheaf on $\bp(\ce)$.
\begin{proposition}\label{lequals1}
(i) There exists a morphism of schemes
$$\rho^{1}:\bp(\ce)\rightarrow\Quot^1_{\S}(\ce)$$
taking a point $z$ of $\bp(\ce)$ to the quotient $\rho^{1}(z)$ given by $\ce=p_{\ast} \co(1)\rightarrow p_{\ast} \co_{z}(1)$. \\
(ii) The morphism $\rho^{1}$ is an isomorphism. \\
(iii) For any locally free sheaf $\cf$ on $\S$,
\begin{equation*}
\rho^{1\ast}\cf^{[1]}=\cf(1).
\end{equation*}
\end{proposition}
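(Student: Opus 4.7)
The plan is to construct the family on $\S\times\bp(\ce)$ giving rise to $\rho^{1}$ via the universal property, then to show $\rho^{1}$ is an isomorphism by writing down its inverse on $\T$-points, and finally to deduce (iii) by combining base change with the projection formula.

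For (i), let $\pi_{1},\pi_{2}$ denote the two projections out of $\S\times\bp(\ce)$, and let $\Gamma:\bp(\ce)\to\S\times\bp(\ce)$, $z\mapsto(p(z),z)$, be the graph of $p$, so that $\pi_{1}\Gamma=p$ and $\pi_{2}\Gamma=\mathrm{id}$. The tautological surjection $p^{\ast}\ce\to\co(1)$ on $\bp(\ce)$ corresponds by adjunction to a morphism
\begin{equation*}
\pi_{1}^{\ast}\ce\rightarrow\Gamma_{\ast}\co(1)
\end{equation*}
on $\S\times\bp(\ce)$, which is surjective: its cokernel is supported on $\Gamma(\bp(\ce))$, and applying $\Gamma^{\ast}$ recovers the tautological surjection. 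The sheaf $\Gamma_{\ast}\co(1)$ is flat over $\bp(\ce)$ (being the direct image of an invertible sheaf along the isomorphism $\pi_{2}\circ\Gamma=\mathrm{id}$) and has length $1$ on each fiber of $\pi_{2}$. By the universal property of $\Quot^{1}(\ce)$ (\Cref{existence}), this family induces a morphism $\rho^{1}:\bp(\ce)\to\Quot^{1}(\ce)$, whose effect on a point $z\in\bp(\ce)$ lying over $s=p(z)$ is the length $1$ quotient $\ce\to\co(1)_{z}$ supported at $s$, which after $p_{\ast}$ is precisely the quotient $p_{\ast}\co(1)\to p_{\ast}\co_{z}(1)$.

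For (ii), I would write down an explicit inverse on $\T$-points. Given a flat family $\ce_{(\T)}\to\cq$ of length $1$ quotients on $\S\times\T$, the support $\Z=\Supp(\cq)\subset\S\times\T$ is finite and flat of degree $1$ over $\T$, so the second projection $\Z\to\T$ is an isomorphism; composing its inverse with $\Z\hookrightarrow\S\times\T\to\S$ yields a map $f:\T\to\S$. Viewing $\cq$ as a sheaf on $\Z\cong\T$ produces an invertible sheaf $\cl$ on $\T$, and the surjection $\ce_{(\T)}\to\cq$ pulls back to a surjection $f^{\ast}\ce\to\cl$, which is the datum of a $\T$-point of $\bp(\ce)$. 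This construction is inverse to the one of (i), so $\rho^{1}$ is an isomorphism of schemes.

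For (iii), the pullback $\cq_{(\bp(\ce))}$ of the universal quotient under $\mathrm{id}_{\S}\times\rho^{1}$ is by construction the family $\Gamma_{\ast}\co(1)$ produced in (i). The base change isomorphism (\ref{eq:basechange}) then gives $\rho^{1\ast}\cf^{[1]}=\pi_{2\ast}(\pi_{1}^{\ast}\cf\otimes\Gamma_{\ast}\co(1))$, and the projection formula for the closed immersion $\Gamma$ identifies $\pi_{1}^{\ast}\cf\otimes\Gamma_{\ast}\co(1)=\Gamma_{\ast}(p^{\ast}\cf\otimes\co(1))$. Since $\pi_{2}\Gamma=\mathrm{id}$, pushing forward yields $\rho^{1\ast}\cf^{[1]}=p^{\ast}\cf\otimes\co(1)$. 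The only real subtlety in the whole argument is notational—keeping straight the two projections out of $\S\times\bp(\ce)$ and recognising the pulled-back universal quotient as $\Gamma_{\ast}\co(1)$; once this is done, (i)--(iii) follow formally.
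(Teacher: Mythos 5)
Your proof is correct, and it takes a genuinely (if mildly) different route from the paper's. For (i), you build the family as $\Gamma_{\ast}\co(1)$ via the graph of $p$ and adjunction from the tautological surjection $p^{\ast}\ce\rightarrow\co(1)$; the paper instead pushes forward the restriction quotient $\co(1)_{(\bp(\ce))}\rightarrow\co(1)_{(\bp(\ce))}\otimes\co_{\Delta}$ from $\bp(\ce)\times\bp(\ce)$ along $p\times\mathrm{id}$ (the two families agree, as the paper's own footnote on the Cartesian square relating the graph of $p$ to the diagonal notes). Your surjectivity argument is cleaner: applying $\Gamma^{\ast}$ and using that the cokernel is pushed forward from $\bp(\ce)$ settles it at once, whereas the paper, having defined the map as a pushforward, must reduce by twisting to the case where $\ce$ (hence $\co(1)$) is globally generated. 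For (ii) the divergence is real: you exhibit an inverse on the functor of points (the classical Kleiman-style identification), while the paper only checks bijectivity on closed points and then invokes smoothness of $\Quot^{1}(\ce)$ from \Cref{tangent} together with Zariski's main theorem. Your argument is more self-contained and does not use smoothness; the paper's is shorter given its earlier results and, more importantly, is the template that survives to the $l=2$ case, where no functor-of-points inverse can exist since $\rho^{2}$ is a resolution of singularities rather than an isomorphism. One small point you gloss over in (ii): you should say why the scheme-theoretic support of $\cq$ is flat of degree $1$ over $\T$ — this follows because $\pi_{2\ast}\cq$ is invertible and $\co_{\S\times\T}$ acts on $\cq$ through $\SEnd(\pi_{2\ast}\cq)=\co_{\T}$, so the annihilator cuts out exactly the graph of the induced map $\T\rightarrow\S$. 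Part (iii) is essentially identical to the paper's, phrased with $\Gamma$ in place of $p\times\mathrm{id}$.
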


\begin{proof}
(i) Let $\Delta\subset\bp(\ce)\times\bp(\ce)$ be the diagonal, and consider the diagram
\begin{equation}\label{eq:diag1}
	\begin{tikzcd}
		\bp(\ce) \arrow[r] \arrow[d] & \bp(\ce)\times\bp(\ce)  \arrow[d, "p\times 1"]  \\
		\S \arrow[r] & \S\times\bp(\ce),
	\end{tikzcd}
\end{equation}
Taking the pushforward $(p\times 1)_{\ast}$ of the quotient $\co(1)\rightarrow\co_{\Delta}(1)$ on $\bp(\ce)\times\bp(\ce)$ yields a morphism of coherent sheaves 
\begin{equation}\label{eq:family1}
\ce\rightarrow(p\times 1)_{\ast}\co_{\Delta}(1)
\end{equation}
on $\S\times\bp(\ce)$. Since the map $\Delta\rightarrow\S\times\bp(\ce)$ induced by $p\times 1$ is affine, (\ref{eq:family1}) restricts to 
\begin{equation*}
\rho^{1}(z):\ce=p_{\ast} \co(1)\rightarrow p_{\ast}\co_{z}(1)=\co_{p(z)} \quad \mathrm{on \ each} \quad \S\times\{z\}.
\end{equation*}
Once we have shown that each $\rho^{1}(z)$ is surjective, we can define the morphism $\rho^1$ to be the one corresponding to the family of quotients (\ref{eq:family1}). It suffices to show that $\rho^{1}(z)$ is nonzero, and of course this is the case if and only if $\rho^{1}(z)\otimes\cl$ is nonzero for some invertible sheaf $\cl$. Under the canonical isomorphism
\begin{equation*}
\bp(\ce)\xrightarrow{\sim}\bp(\ce\otimes\cl),
\end{equation*}
the morphism $\rho^{1}(z)\otimes\cl$ can be identified with the pushforward of
\begin{equation*}
\co_{\bp(\ce\otimes\cl)}(1)\rightarrow\co_{\bp(\ce\otimes\cl)}(1)\otimes\co_{z},
\end{equation*}
and we can therefore assume that $\ce$ is globally generated. The sheaf $\co(1)$ is then also globally generated, and in particular, the evaluation map
\begin{equation*}
\H^0(\co(1))\rightarrow\H^0(\co_{z})
\end{equation*}
is nonzero, and $\rho^{1}(z)$ is therefore nonzero as well.

(ii) It is clear that $\rho^1$ induces a bijection on points; as $\Quot^{1}_{\S}(\ce)$ is smooth (see \Cref{tangent}), 
Zariski's main theorem implies that $\rho^{1}$ is an isomorphism.

(iii) Applying \Cref{basechange} to $\rho^{1}$, 
we obtain that the pullback $\rho^{1\ast}\cf^{[1]}$ is given by the pushforward of $\cf\otimes(p\times 1)_{\ast}\co_{\Delta}(1)$ to $\bp(\ce)$, 
which is $\cf(1)$ by the projection formula and (\ref{eq:diag1}).
\end{proof}
\subsection{Obstruction theory}\label{pot}

By the work of Grothendieck \cite{Gro}, the Quot scheme $\Quot^l_{\S}(\ce)$ has an intrinsic obstruction theory. 
The tangent space of $\Quot^l_{\S}(\ce)$ at a point $q$ can be identified with $\Hom(\cs_{q},\cq_{q})$, 
while the obstruction space is $\Ext^{1}(\cs_{q},\cq_{q})$. 
The obstruction theory of $\Quot^l_{\S}(\ce)$ can be viewed as a morphism in the derived category
\begin{equation}\label{eq:at}
\mathrm{at}:\R\SHom_{\pi}(\cs,\cq)^{\vee}\rightarrow\L_{\Quot^l_{\S}(\ce)},
\end{equation}
where $\L_{\Quot^l_{\S}(\ce)}$ denotes the cotangent complex of $\Quot^l_{\S}(\ce)$. 
While this morphism and its implications are well-known, we shall give a brief account; 
we stress that all the ideas presented here are due to Grothendieck \cite{Gro} and especially Illusie \cite{Ill}. 
First of all, Grothendieck duality for $\pi$ gives a canonical isomorphism
\begin{equation*}
\R\SHom_{\pi}(\cs,\cq)^{\vee} \xrightarrow{\sim}\R\SHom_{\pi}(\cq,\cs\otimes\omega_{\pi}[2]).
\end{equation*}
Using this isomorphism and Grothendieck duality, we obtain a functorial isomorphism
\begin{equation}\label{eq:func}
\Ext^{0}_{\S\times\Quot^{l}_{\S}(\ce)}(\R\SHom_{\pi}(\cs,\cq)^{\vee}, -) \xrightarrow{\sim} \Ext^{0}_{\S\times\Quot^{l}_{\S}(\ce)}(\cs, \pi^{\ast}(-)\otimes^{\L}\cq).
\end{equation}
Consider the Atiyah class
\begin{equation*}
\mathrm{At}\in\Ext^{1}_{\S\times\Quot^{l}_{\S}(\ce)}(\cq, \pi^{\ast}\L_{\Quot^{l}_{\S}(\ce)}\otimes^{\L}\cq)
\end{equation*}
of $\cq$ relative to the projection $\S\times\Quot^{l}_{\S}(\ce)\rightarrow\S$. One has a canonical factorisation
\begin{equation*}
\mathrm{At} = \mathrm{at}[1]\circ\delta,
\end{equation*}
where
\begin{equation*}
\mathrm{at}\in\Ext^{0}_{\S\times\Quot^{l}_{\S}(\ce)}(\cs, \pi^{\ast}\L_{\Quot^{l}_{\S}(\ce)}\otimes^{\L}\cq), \quad \mathrm{and} \quad \delta\in\Ext^{1}_{\S\times\Quot^{l}_{\S}(\ce)}(\cq, \cs)
\end{equation*}
is the class of the universal exact sequence on $\S\times\Quot^{l}_{\S}(\ce)$. 
(For a precise definition of $\mathrm{at}$ we refer to \cite{Ill} or \cite{Gil}.) 
Under the isomorphism (\ref{eq:func}), $\mathrm{at}$ yields (\ref{eq:at}). 
\begin{theorem}\label{ot}
The morphism $\mathrm{at}$ is an obstruction theory in the sense of \cite{BeF}.
\end{theorem}
The idea behind this theorem is as follows. Consider a morphism $f:\T\rightarrow\Quot^{l}_{\S}(\ce)$, and closed embedding $\T\rightarrow\overline{\T}$ with ideal sheaf $\ci$ satisfying $\ci^{2}=0$. The composite 
\begin{equation*}
\L f^{\ast}\L_{\Quot^{l}_{\S}(\ce)}\rightarrow\L_{\T}\rightarrow\ci[1],
\end{equation*}
where $\L_{\T}\rightarrow\ci[1]$ is given by truncation of $\L_{\T}\rightarrow\L_{\T/\overline{\T}}$, defines a class
\begin{equation*}
\omega(f)\in\Ext^{1}(\L f^{\ast}\L_{\Quot^{l}_{\S}(\ce)}, \ci).
\end{equation*}
By a theorem of Grothendieck \cite{Gro3}, the morphism $f$ extends to a morphism $\overline{\T}\rightarrow\Quot^{l}_{\S}(\ce)$ if and only if $\omega(f)=0$. The meaning of \Cref{ot} is essentially that
\begin{equation*}
\omega(f)\circ\L f^{\ast}\mathrm{at}=0
\end{equation*}
if and only if the morphism $f$ extends to a morphism $\overline{\T}\rightarrow\Quot^{l}_{\S}(\ce)$. Parallel to (\ref{eq:func}), one finds, by using base change, Grothendieck duality, and (\ref{eq:vanishing}), a canonical isomorphism
\begin{equation}\label{eq:funcf}
\Ext^{1}_{\T}(\L f^{\ast}\R\SHom_{\pi}(\cs, \cq)^{\vee}, \ci)\xrightarrow{\sim} \Ext^{1}_{\S\times\T}(\cs_{f}, \pi_{\T}^{\ast}\ci\otimes\cq_{f}),
\end{equation}
where $\pi_{\T}:\S\times\T\rightarrow\T$ is the projection. By the theory of Illusie \cite{Ill}, there is a class in $\Ext^{1}_{\S\times\T}(\cs_{f}, \pi_{\T}^{\ast}\ci\otimes\cq_{f})$ which is the obstruction to extending the quotient $\ce\rightarrow\cq_{f}$ on $\S\times\T$ to a quotient on $\S\times\overline{\T}$; since extending $\ce\rightarrow\cq_{f}$ is equivalent to extending $f$, \Cref{ot} comes down to the observation that under (\ref{eq:funcf}), $\omega(f)\circ\L f^{\ast}\mathrm{at}$ corresponds to Illusie's class.
\begin{remark}
\Cref{ot} holds of course for more general Quot schemes, but if $\S$ is a surface, the complex $\R\SHom_{\pi}(\cs,\cq)^{\vee}$ is perfect of amplitude contained in $[-1, 0]$. Indeed, consider
\begin{equation}\label{eq:Tvir}
\T^{\vir}_{\Quot^{l}_{\S}(\ce)}=\R\SHom_{\pi}(\cs,\cq).
\end{equation}
Since $\cs$ is a family of torsion-free sheaves on $\S$, $\cs$ is quasi-isomorphic to a two-term complex $\cf_{1}\rightarrow\cf_{0}$ of locally free sheaves.
 By cohomology and base change, any locally free sheaf $\cf$ on $\S\times\Quot^{l}_{\S}(\ce)$ is $\R\SHom_{\pi}(-,\cq)$-acyclic, 
 and $\SHom_{\pi}(\cf,\cq)$ is locally free. In particular, (\ref{eq:Tvir}) is given by the two-term complex
\begin{equation*}
	\SHom_{\pi}(\cf_{0},\cq)\rightarrow\SHom_{\pi}(\cf_{1},\cq)
\end{equation*}
of locally free sheaves.	
\end{remark}
It follows from this remark and \Cref{ot} that $\mathrm{at}$ defines a perfect obstruction theory of virtual dimension $lr$. 
The general construction of \cite{BeF} yields a virtual class
\begin{equation*}
[\Quot^{l}_{\S}(\ce)]^{\vir} \in \A_{lr}(\Quot^{l}_{\S}(\ce)).
\end{equation*}
(Virtual classes for Quot schemes were first considered in the papers \cite{CiK, MaO} and \cite{MOP1}.) 
As $\Quot^{l}_{\S}(\ce)$ is a projective scheme, this class is given by the formula
\begin{equation}\label{eq:Siebert}
[\Quot^{l}_{\S}(\ce)]^{\vir}=\left\lbrace s(\T^{\vir}_{\Quot^{l}_{\S}(\ce)})\cap c_{\F}(\Quot^{l}_{\S}(\ce))\right\rbrace_{lr}
\end{equation}
of Siebert \cite{Sie}. Here $c_{\F}(\Quot^{l}_{\S}(\ce))$ is the Fulton class, 
which may be viewed as the Chern class of the singular scheme $\Quot^{l}_{\S}(\ce)$. 
If $\Quot^{l}_{\S}(\ce)$ is a closed subscheme of a smooth scheme $\A$, then $c_{\F}(\Quot^{l}_{\S}(\ce))$ is given by
\begin{equation*}
c_{\F}(\Quot^{l}_{\S}(\ce)) = c(\Theta_{\A}|_{\Quot^{l}_{\S}(\ce)})\cap s(\C_{\Quot^{l}_{\S}(\ce) / \A}),
\end{equation*}
where $\Theta_{\A}$ is the tangent sheaf of $\A$, 
and $s(\C_{\Quot^{l}_{\S}(\ce) / \A})$ is the Segre class of the normal cone 
$\C_{\Quot^{l}_{\S}(\ce) / \A}$ of $\Quot^{l}_{\S}(\ce)\subset\A$ (see \cite{Ful}, Example 4.2.6).

\section{Local properties}\label{sec:localp}

\subsection{Embedding dimension}

We start with the following vanishing result.
\begin{lemma}\label{chi}
For every point $q$ of $\Quot^l_{\S}(\ce)$, we have
\begin{equation*}
\chi(\S, \cq_q,\cq_q)=0.
\end{equation*}
\end{lemma}

\begin{proof}
Applying Theorem 3 of \cite{Ban} to the universal quotient and the pullback of $\cq_q$ along the projection $\S\times\Quot^l_{\S}(\ce)\rightarrow\S$, 
we obtain that the function
\begin{equation*}
\Quot^l_{\S}(\ce)\rightarrow\bz, \quad q^{'}\mapsto\chi(\S, \cq_{q},\cq_{q^{'}})
\end{equation*}
is locally constant. 
By picking a point $q^{'}$ in the same component of $\Quot^l_{\S}(\ce)$ as $q$ such that the supports of $\cq_q$ and $\cq_{q'}$ are disjoint, we have
\begin{equation*}
\SExt^{n}_{\S}(\cq_{q},\cq_{q^{'}}) = 0
\end{equation*}
for all $n\geqslant 0$, and in particular
\begin{equation*}
\chi(\S, \cq_q,\cq_q) = \chi(\S, \cq_{q},\cq_{q'}) = 0. \qedhere
\end{equation*}
\end{proof}

\begin{theorem}\label{tangent}
For every point $q$ of $\Quot^{l}_{\S}(\ce)$,
\begin{equation*}
\dim\T_{q} \Quot^l_{\S}(\ce)=lr+h^{0}(\S, \SEnd(\cq_{q})).
\end{equation*}
\end{theorem}

\begin{proof}
As $\T_{q} \Quot^l_{\S}(\ce)=\Hom_{\S}(\cs_{q}, \cq_{q})$, the short exact sequence
\begin{equation*}
0\rightarrow\cs_{q}\rightarrow\ce\rightarrow\cq_{q}\rightarrow 0
\end{equation*}
induces an exact sequence of Ext groups
\begin{equation}\label{eq:les}
0\rightarrow\H^{0}(\S, \SEnd(\cq_{q}))\rightarrow\Hom_{\S}(\ce,\cq_q)\rightarrow\T_{q} \Quot^l_{\S}(\ce)\rightarrow\Ext^1_{\S}(\cq_q,\cq_q)\rightarrow 0.
\end{equation}
Here we have used the dimensional vanishing
\begin{equation*}
\Ext^n_{\S}(\ce,\cq_{q})\xrightarrow{\sim}\H^{n}(\S, \ce^{\vee}\otimes\cq_{q})=0 \quad (n\geqslant 1).
\end{equation*}
Moreover, $\Hom_{\S}(\ce,\cq_q)\xrightarrow{\sim}\H^{0}(\S, \ce^{\vee}\otimes\cq_{q})$ has dimension $lr$. By Serre duality
\begin{equation*}
\Ext^2_{\S}(\cq_q,\cq_q)\simeq\Hom_{\S}(\cq_q,\cq_q\otimes\omega_{\S})^{\vee}\simeq\H^{0}(\S, \SEnd(\cq_{q}))^{\vee}.
\end{equation*}
\Cref{chi} now implies the equality
\begin{equation*}
\dim\Ext^1_{\S}(\cq_{q},\cq_{q}) = 2h^{0}(\S, \SEnd(\cq_{q})),
\end{equation*}
and it remains to take dimensions in (\ref{eq:les}).
\end{proof}

By combining \Cref{basic} and \Cref{tangent}, we get the following smoothness criterion:
\begin{corollary}\label{smooth}
The scheme $\Quot^l_{\S}(\ce)$ is smooth at $q$ if and only if $h^{0}(\S, \SEnd(\cq_{q}))=l$. 
\end{corollary}
In particular, this implies the follwing basic result.
\begin{corollary}\label{sing}
The scheme $\Quot^l_{\S}(\ce)$ is generically smooth; it is singular for $l,r\geqslant 2$.
\end{corollary}
Indeed, $\Quot^l_{\S}(\ce)_{(1^{l})}$ is contained in the smooth locus $\Quot^{l}_{\S}(\ce)^{\circ}$ of $\Quot^{l}_{\S}(\ce)$; 
for the second part, it is easy to construct a point $q$ such that $\cq_{q}$ has a direct summand of the form $\co_{s}^{\oplus 2}$ 
for some $s\in\S$ (see also \cite{Reg, OP}).

\Cref{smooth} also gives:
\begin{corollary}\label{smoothI}
For any locally free quotient $\ce\rightarrow\ce^{''}$ on $\S$, 
\begin{equation*}
\Quot^{l}_{\S}(\ce^{''})^{\circ}=\Quot^{l}_{\S}(\ce)^{\circ}\cap\Quot^{l}_{\S}(\ce^{''}).
\end{equation*}
\end{corollary}

For $l=2$ and any smooth point $q$ of $\Quot^{l}_{\S}(\ce)$, 
the corresponding sheaf $\cq_{q}$ is a structure sheaf. 
Contrary to Lemma 2.13 in the well-known paper \cite{Muk}, this is no longer true when $l\geqslant 3$. 
\begin{example}
Let $\Z\subset\S$ be a length $l$ subscheme which is not Gorenstein. 
The dualising sheaf $\omega_{\Z}$ cannot be a structure sheaf; 
as the canonical morphism $\co_{\Z}\rightarrow\SEnd(\omega_{\Z})$ is an isomorphism, 
we have $h^{0}(\S, \SEnd(\omega_{\Z}))=h^{0}(\Z, \co_{\Z})=l$. 
Taking a presentation $\co^{\oplus r}_{\Z}\rightarrow\omega_{\Z}$, 
we obtain a smooth point $q$ of $\Quot^{l}_{\S}(\co^{\oplus r})$.
\end{example}

\begin{remark}
It follows from (\ref{eq:les}) that for any point $q$ of $\Quot^l_{\S}(\ce)$ we have the upper bound
\begin{equation}\label{eq:bound}
\dim\T_{q} \Quot^l_{\S}(\ce)\leqslant 2lr.
\end{equation}
This bound is sharp. (Take $\ce=\co^{\oplus r}$ and $r\geqslant l$; 
fix a point $s$ of $\S$ and consider the quotient $q$ given by the projection $\co^{\oplus r}\rightarrow\co_s^{\oplus l}$. 
Then $\H^{0}(\S, \SEnd(\co_s^{\oplus l}))$ has dimension $l^{2}$, and $\T_{q} \Quot^l_{\S}(\ce)$ has dimension $lr+l^{2}$, 
which is $2lr$ for $l=r$.) It is a remarkable fact that in the local case of $\ce=\co^{\oplus r}$ on $\S=\ba^{2}$, 
there is a smooth scheme of dimension $2lr$ containing $\Quot^l_{\S}(\ce)$. 
Indeed, let $\ba^{2}\subset\bp_{2}$ and write $\L=\bp_2-\ba^2$. 
Let $\M(r,l)$ be the moduli space of torsion free sheaves on $\bp_2$ of rank $r$ and $c_2=l$, 
with a fixed trivialisation (framing) over $\L$; this scheme is smooth and irreducible of dimension $2lr$ \cite{Nak}. 
Viewing $\Quot^l_{\ba^{2}}(\co^{\oplus r})$ as an open subscheme of $\Quot^l_{\bp_{2}}(\co^{\oplus r})$, we get a closed embedding 
\begin{equation*}
\Quot^l_{\ba^{2}}(\co^{\oplus r}) \subset \M(r,l)
\end{equation*}
by taking a point $q$ to the torsion-free sheaf $\cs_q$ on $\bp_{2}$ with framing given by $\cs_{q}|_{\L}\xrightarrow{\sim}\co^{\oplus r}_{\L}$. 
\end{remark}
\subsection{Conjectural description of the singularities}

To have a qualitative understanding of the singularities of $\Quot^{l}_{\S}(\ce)$ is both of interest in itself and important for many applications. We pose the following conjecture:
\begin{conjecture}\label{singularities}
The scheme $\Quot^{l}_{\S}(\ce)$ has rational singularities.
\end{conjecture}
In particular, we expect that $\Quot^l_{\S}(\ce)$ is reduced. (This question was raised in \cite{Reg}.)
\begin{remark}
If $\S$ is connected, then it is clear from (\ref{eq:flag}) that $\Quot^{l}_{\S}(\ce)$ is connected ($\bp(\cs)$ is connected); as a connected normal scheme is irreducible, \Cref{singularities} in particular implies \Cref{basic} (i).
\end{remark}

To motivate \Cref{singularities}, we elaborate on Nakajima's description of $\Quot^{l}_{\ba^{2}}(\co^{\oplus r})$. Recall (e.g. \cite{Gin}) that the commuting scheme of a Lie algebra $\mathfrak{g}$ is the closed subscheme
\begin{equation*}
\C(\mathfrak{g})\subset\mathfrak{g}\times\mathfrak{g}
\end{equation*}
defined by the vanishing $[x, y]=0$ of the Lie bracket.
Consider now a vector space $\V$ of dimension $l$, and the associated general linear Lie algebra $\mathfrak{g}=\mathfrak{gl}(\V)$.
The reductive group $\GL(\V)$ acts on the affine scheme $\A=\C(\mathfrak{gl}(\V))\times\V^{r}$ by
\begin{equation*}
g(x,y,v_1,\ldots,v_r)=(gxg^{-1},gyg^{-1},gv_1,\ldots,gv_r),
\end{equation*}
and a natural $\GL(\V)$-linearisation $\chi$ of the structure sheaf $\co_{\A}$
is given by 
\begin{equation*}
g(a,t)=(ga,\det(g)^{-1}t).
\end{equation*}
Let $\U$ be the open subscheme of $\A$ whose points $a=(x,y,v_1,\ldots,v_r)$ are such that there exists no proper subspace of $\V$ containing $v_1, \ldots, v_r$ which is invariant for both $x$ and $y$. This is precisely the locus of stable points in the sense of geometric invariant theory: 
\begin{lemma}\label{GIT}
We have $\U=\A^{s}=\A^{ss}$, and the action of $\GL(\V)$ on $\U$ is free.
\end{lemma}
Mutatis mutandis, the proof is the same as in the $r=1$ case \cite{Nak}. We consider the geometric invariant theory quotient
\begin{equation*}
\A\git_{\chi}\GL(\V) = \U/\GL(\V).
\end{equation*}
By \Cref{GIT} and Luna's theorem \cite{HuL}, the morphism
\begin{equation}\label{eq:proj}
\U\rightarrow\U/\GL(\V)
\end{equation}
is a principal $\GL(\V)$-bundle. We can now state the ADHM description.
\begin{theorem}[Nakajima \cite{Nak}]\label{ADHM}
There is a canonical isomorphism
\begin{equation*}
\U/\GL(\V)\xrightarrow{\sim}\Quot^{l}_{\ba^2}(\co^{\oplus r}).
\end{equation*}
\end{theorem}
Again, the proof in the $r=1$ case (see \cite{Nak}) readily generalises to higher rank. Since (\ref{eq:proj}) is a principal $\GL(\V)$-bundle, the singularities of $\Quot^{l}_{\ba^2}(\co^{\oplus r})$ can be studied in terms of those of $\U$. One is thus led to study the singularities of the commuting schemes, a question which is addressed by the following long-standing conjecture of Artin and Hochster.
\begin{conjecture}[Artin-Hochster]\label{AH}
The commuting scheme $\C(\mathfrak{gl}(\V))$ is Cohen-Macaulay.
\end{conjecture}
However, it is not known whether $\C(\mathfrak{gl}(\V))$ is reduced or not; Ginzburg proved that its normalisation is Cohen-Macaulay \cite{Gin}. As $\ce$ is locally free and $\S$ is smooth, it suffices to prove \Cref{singularities} in the case of $\ce=\co^{\oplus r}$ on $\S=\ba^{2}$. Hence \Cref{singularities} is implied by a stronger (rational singularities) version of \Cref{AH}. We use this to show:
\begin{proposition}\label{ratsing}
The scheme $\Quot^{2}_{\S}(\ce)$ has rational singularities.
\end{proposition}

\begin{proof}
As indicated above, it suffices to show that the commuting scheme
\begin{equation*}
\C(\mathfrak{gl}(\V))\xrightarrow{\sim}\C(\mathfrak{sl}(\V))\times\ba^2.
\end{equation*}
has rational singularities when $\V$ has dimension $l=2$. Writing out the equations defining $\C(\mathfrak{sl}(\V))$, it is evident that $\C(\mathfrak{sl}(\V))$ is just the generic determinantal scheme $\M_1(3,2)$ of $3\times 2$-matrices of rank $\leqslant 1$. By the first fundamental theorem of invariant theory, it is a GIT quotient of a smooth scheme (see e.g. \cite{ACGH}). This implies that $\M_{1}(3,2)$ has rational singularities by the theorem of Boutot \cite{Bou}. (Alternatively, one can show this directly by considering the standard resolution of singularities $\tilde{\M}_{1}(3,2)\rightarrow\M_{1}(3,2)$.)
\end{proof}

\begin{remark}
(i) The scheme $\C(\mathfrak{sl}_2)$ is not Gorenstein at $0$, from which one can deduce that $\Quot^2_{\S}(\ce)$ cannot be Gorenstein. 
In particular, as the dualising sheaf of a normal Cohen-Macaulay scheme is reflexive, $\Quot^2_{\S}(\ce)$ is not locally factorial. \\
(ii) Thompson \cite{Tho} proved that the commuting scheme $\C(\mathfrak{gl}_3)$ is normal and Cohen-Macaulay 
(implying that $\Quot^{3}_{\S}(\ce)$ is normal and Cohen-Macaulay); however, it is not known if the singularities of $\C(\mathfrak{gl}_3)$ are rational. \\
(iii) If $\Quot^{l}_{\S}(\ce)$ is Cohen-Macaulay, then the flag Quot scheme $\Quot^{l, l+1}_{\S}(\ce)$ is Cohen-Macaulay as well. Indeed, as in (\ref{eq:flag}) we can identify $\Quot^{l, l+1}_{\S}(\ce)$ with the projectivisation $\bp(\cs)$. Taking a locally free resolution 
\begin{equation*}
0\rightarrow\cf_{1}\rightarrow\cf_{0}\rightarrow\cs\rightarrow 0
\end{equation*}
of $\cs$, we see that $\bp(\cs)\subset\bp(\cf_{0})$ is the zero scheme of the regular section $\cf_{1}\rightarrow\cf_{0}\rightarrow\co(1)$ of the locally free sheaf $\cf_{1}^{\vee}(1)$ on $\bp(\cf_{0})$.
\end{remark}
\subsection{Resolution of singularities}

As $\Quot^l_{\S}(\ce)$ is typically singular (\Cref{sing}), 
it is a natural question to construct an explicit resolution of singularities. 
Ideally, one would want a modular resolution; we show, by an argument parallel to \Cref{lequals1}, 
that in the $l=2$ case such a resolution is given by the Hilbert scheme of points $\bp(\ce)^{[2]}$.
\begin{theorem}\label{lequals2}
(i) There exists a morphism of schemes
\begin{equation*}
\rho^{2}:\bp(\ce)^{[2]}\rightarrow\Quot^2_{\S}(\ce)
\end{equation*}
taking a length $2$ subscheme $\Z$ of $\bp(\ce)$ to $\ce=p_{\ast} \co(1)\rightarrow p_{\ast} \co_{\Z}(1)$. \\
(ii) The morphism $\rho^{2}$ is a resolution of singularities of $\Quot^2_{\S}(\ce)$. \\
(iii) If $\cf$ is a locally free sheaf on $\S$, then
\begin{equation*}
\rho^{2\ast}\cf^{[2]}=\cf(1)^{[2]}.
\end{equation*}
\end{theorem}

\begin{proof}
(i) Let $\cz\subset\bp(\ce)\times\bp(\ce)^{[2]}$ be the universal subscheme, and consider the diagram
\begin{equation}\label{eq:diag2}
\begin{tikzcd}
\bp(\ce) \arrow[d, "p", swap] & \bp(\ce)\times\bp(\ce)^{[2]} \arrow[l]  \arrow[d, "p\times 1"]   \\
\S & \arrow[l] \S\times\bp(\ce)^{[2]}.
\end{tikzcd}
\end{equation}
Taking the pushforward $(p\times 1)_{\ast}$ of the quotient $\co(1)\rightarrow\co_{\cz}(1)$, we obtain a morphism
\begin{equation}\label{eq:family2}
\ce\rightarrow(p\times 1)_{\ast}\co_{\cz}(1).
\end{equation}
Since $\cz\rightarrow\S\times\bp(\ce)^{[2]}$ is affine, by cohomology and base change, the map (\ref{eq:family2}) restricts to 
\begin{equation*}
\rho^{2}(\Z):\ce=p_{\ast} \co(1)\rightarrow p_{\ast}\co_{\Z}(1) \quad \mathrm{on \ each} \quad \S\times\{\Z\}.
\end{equation*}
The sheaf $p_{\ast}\co_{\Z}(1)$ is of length $2$, since its support is contained in $p(\Z)$ and
\begin{equation*}
h^0(\S, p_{\ast}\co_{\Z}(1))) = h^0(\bp(\ce), \co_{\Z}(1)) = 2.
\end{equation*}
Because $\co_{\cz}(1)$ is flat over $\bp(\ce)^{[2]}$, $(p\times 1)_{\ast}\co_{\cz}(1)$ is a flat family of length $2$ sheaves parametrised by $\bp(\ce)^{[2]}$, and to define $\rho^{2}$ it remains to prove that $\rho^{2}(\Z)$ is surjective.

If $\Z$ is a length $2$ subscheme concentrated at a single point $z$, then we have an exact sequence of twisted ideal sheaves
\begin{equation}\label{eq:twisted}
0\rightarrow\ci_{\Z}(1)\rightarrow\ci_{z}(1)\rightarrow\co_{z}\rightarrow 0
\end{equation}
where $\co_{z}\simeq\co_{z}(1)$ is the pushforward of the ideal sheaf of $z$ in $\Z$. The morphism of sheaves $p_{\ast}\co(1)\rightarrow p_{\ast}\co_{z}(1)$ is surjective by \Cref{lequals1} (i), and the vanishing $\R^{1}p_{\ast}\co(1)=0$ implies
\begin{equation*}
\R^{1}p_{\ast}\ci_{z}(1) = 0.
\end{equation*}
The long exact sequence of higher direct images of (\ref{eq:twisted}) gives
\begin{equation*}
0\rightarrow p_{\ast}\ci_{\Z}(1)\rightarrow p_{\ast}\ci_{z}(1)\rightarrow\co_{p(z)}\rightarrow\R^{1}p_{\ast}\ci_{\Z}(1)\rightarrow 0.
\end{equation*}
To show that $\rho^{2}(\Z)$ is surjective, we show $\R^{1}p_{\ast}\ci_{\Z}(1)=0$, and for this it is enough to prove that $p_{\ast}\ci_{z}(1)\rightarrow\co_{p(z)}$ is nonzero. After twisting with an invertible sheaf, we may (as in the proof of \Cref{lequals1}) assume that $\co(1)$ is very ample. Since the global sections of a very ample line bundle separate tangent vectors, the map
\begin{equation*}
\H^{0}(\ci_{z}(1))\rightarrow\H^{0}(\co_{z})
\end{equation*}
is then nonzero.

If the scheme $\Z=\{z_{1},z_{2}\}$ is reduced and not contained in a single fibre, then
\begin{equation*}
\rho^{2}(\Z) = (\rho^{1}(z_{1}), \rho^{2}(z_{2})):p_{\ast} \co(1)\rightarrow p_{\ast} \co_{\Z}(1)=p_{\ast}\co_{z_{1}}(1)\oplus p_{\ast}\co_{z_{2}}(1),
\end{equation*}
which is surjective by \Cref{lequals1} (i). Finally, if $\Z$ is contained in a single fibre $\bp(\ce(x))$, $\Z$ defines a line $\L$ in the projective space $\bp(\ce(x))$, and $\rho^{2}(\Z)$ can be identified with the composite
\begin{equation*}
\ce\rightarrow\ce(x)=\H^0(\co_{\bp(\ce(x))}(1))\rightarrow\H^0(\co_{\L}(1))\rightarrow\H^0(\co_{\Z}(1))
\end{equation*}
of surjective restriction maps.
	
(ii) We show that $\bp(\cq_{q})^{[2]}$ is the fibre of $\rho^{2}$ over a point $q$. 
As the smooth locus of $\Quot^{2}_{\S}(\ce)$ consists of all points $q$ with $\cq_{q}=\co_{\Z}$ for some length 2 subscheme $\Z\subset\S$, 
this in particular proves (by Zariski's main theorem) that $\rho^{2}$ is an isomorphism over the smooth locus. 
Consider a length $2$ subscheme  $\Z\subset\bp(\cq_{q})$. 
The factorisation $\Z\subset\bp(\cq_{q})\subset\bp(\ce)$ induces a factorisation
\begin{equation*}
\begin{tikzcd}
p_{\ast} \co(1) \arrow[r, "\rho^{2}(\Z)"] \arrow[d] & p_{\ast}\co_{\Z}(1)  \\
p_{\ast}\co_{\bp(\cq_{q})}(1), \arrow[ru] & 
\end{tikzcd}
\end{equation*}
and we now show that the morphism
\begin{equation}\label{eq:isom}
\cq_{q}=p_{\ast}\co_{\bp(\cq_{q})}(1)\rightarrow p_{\ast}\co_{\Z}(1)
\end{equation}
is an isomorphism; the above diagram then implies that $\Z$ lies in the fibre of $\rho^2$ over $q$. As (\ref{eq:isom}) is a morphism of length $2$ sheaves, it suffices to show that it is surjective. If $\cq_q$ is of the form $\co_{\Z}$ then this is clear (the map $\Z\rightarrow\S$ is finite); if however $\cq_q\simeq\co_{s}^{\oplus 2}$ for a point $s\in\S$, then $\bp(\cq_{q})\simeq\bp_{1}$ and the surjectivity follows from the vanishing $\H^1(\co_{\bp_{1}}(-1))=0$. On the other hand, assume that a length 2 subscheme $\Z\subset\bp(\ce)$ lies in the fibre of $\rho^2$ over $q$. As $\co_{\Z}(1)$ is very ample with respect to $\Z\rightarrow\S$, the equality
\begin{equation*}
\bp(\cq_{q})=\bp(p_{\ast}\co_{\Z}(1))
\end{equation*}
of subschemes of $\bp(\ce)$ shows that $\Z\subset\bp(\cq_{q})$.
	
(iii) By \Cref{basechange}, $\rho^{2\ast}\cf^{[2]}$ is given by the pushforward of $\cf\otimes(p\times 1)_{\ast}\co_{\cz}(1)$ to $\bp(\ce)^{[2]}$, which is by the projection formula for $p\times 1$ and (\ref{eq:diag2}) equal to $\cf(1)^{[2]}$.
\end{proof}

\section{Intersection theory}\label{sec:inter}

\subsection{Relations}

By \Cref{I-III} (i), a quotient of sheaves $\ce\rightarrow\ce^{''}$ induces a closed immersion
\begin{equation*}
	\iota:\Quot^{l}_{\S}(\ce^{''})\rightarrow\Quot^{l}_{\S}(\ce).
\end{equation*}
We now show that if $\ce^{''}$ is locally free, $\iota$ induces relations in intersection theory. 

\begin{theorem}\label{relations}
Let $\ce\rightarrow\ce^{''}$ be a locally free quotient with kernel $\ce^{'}$, 
and consider the section $s\in\H^{0}(\S, \ce\otimes\ce^{'\vee})$ given by the inclusion $\ce'\rightarrow\ce$. 
The induced section $s^{[l]}\in\H^0(\Quot^l_{\S}(\ce), \ce^{'\vee[l]})$ is regular, 
and its zero scheme is $\Quot^l_{\S}(\ce^{''})\subset\Quot^l_{\S}(\ce)$. 
In particular,
\begin{equation*}
\iota_\ast[\Quot^l_{\S}(\ce^{''})] = e(\ce^{'\vee[l]}) \cap [\Quot^l_{\S}(\ce)]
\end{equation*}
holds in $\A_{\ast}(\Quot^l_{\S}(\ce))$. Moreover, we have the vanishing
\begin{equation*}
e(\ce^{\vee[l]}) = 0 \quad in \quad \A_{\ast}(\Quot^l_{\S}(\ce)).
\end{equation*}
\end{theorem}

\begin{proof}
By applying \Cref{sectiondiag} to $\cf=\ce^{'\vee}$, we obtain a commutative diagram
\begin{equation*}
\begin{tikzcd}
\Hom_{\S}(\ce^{'}, \ce)  \arrow[r, "^{[l]}"] \arrow[d] & \H^{0}(\Quot^{l}_{\S}(\ce), \cf^{[l]}) \arrow[d, "\iota^{\ast}"]  \\
\Hom_{\S}(\ce^{'}, \ce^{''})  \arrow[r, swap, "^{[l]}"]  &  \H^{0}(\Quot^{l}_{\S}(\ce^{''}), \cf^{[l]}).
\end{tikzcd}
\end{equation*}
Hence $\iota^{\ast}s^{[l]}=0^{[l]}=0$, and $\iota$ induces a morphism
\begin{equation}\label{eq:zeroid}
\Quot^{l}_{\S}(\ce^{''})\rightarrow\Z(s^{[l]})
\end{equation}
of schemes over $\Quot^{l}_{\S}(\ce)$. On the other hand, by definition of $s^{[l]}$ the restriction of 
\begin{equation*}
\ce^{'}\rightarrow\ce\rightarrow\cq
\end{equation*}
to $\S\times\Z(s^{[l]})$ vanishes. It therefore induces a quotient of $\ce^{''}$ on $\S\times\Z(s^{[l]})$, and in particular a morphism 
\begin{equation*}
\Z(s^{[l]})\rightarrow\Quot^{l}_{\S}(\ce^{''})
\end{equation*}
of schemes over $\Quot^{l}_{\S}(\ce)$, which is an inverse to (\ref{eq:zeroid}).
	
The regularity of $s^{[l]}$ can be checked in a neighbourhood of a point $q$ of $\Z(s^{[l]})$. By the same argument as in the proof of \Cref{locfree}, we may find an affine open neighbourhood $\U\subset\S$ of the support of $\cq_{q}$. Then $\Quot^{l}_{\U}(\ce\vert_{\U})$ is an open neighbourhood of $q$ in $\Quot^{l}_{\S}(\ce)$, and the restriction of
\begin{equation*}
0\rightarrow\ce^{'}\rightarrow\ce\rightarrow\ce^{''}\rightarrow 0
\end{equation*}
to $\U$ splits. Without loss of generality, we may therefore assume that
\begin{equation*}
\ce = \ce^{'}\oplus\ce^{''}
\end{equation*}
on $\S$. Consider the open subscheme $\V\subset\Quot^{l}_{\S}(\ce)$ with the following functor of points (which is naturally an open subfunctor of $h_{\Quot^{l}_{\S}(\ce)}$): a morphism $f:\T\rightarrow\V$ corresponds to a quotient $\ce\rightarrow\cq_{f}$ on $\S\times\T$ such that $\ce^{''}\rightarrow\ce\rightarrow\cq_{f}$ is surjective. The scheme $\V$ is an open neighbourhood of $\Z(s^{[l]})=\Quot^{l}_{\S}(\ce^{''})$, and by restricting the morphism $\ce^{''}\rightarrow\ce\rightarrow\cq$ on $\S\times\Quot^{l}_{\S}(\ce)$ to $\S\times\V$, we obtain a morphism of schemes
\begin{equation*}
\phi:\V\rightarrow\Quot^{l}_{\S}(\ce^{''}).
\end{equation*}
The functor of points $h_{\V/\Quot^{l}_{\S}(\ce^{''})}$ of $\V$ as a scheme over $\Quot^{l}_{\S}(\ce^{''})$ can be described as follows. Let $\T$ be a scheme over $\Quot^{l}_{\S}(\ce^{''})$ with structural morphism $g$ and corresponding quotient $\ce^{''}\rightarrow\cq_{g}$ on $\S\times\T$. If $f:\T\rightarrow\V$ is a morphism of schemes, to require that $f$ be a morphism of schemes over $\Quot^{l}_{\S}(\ce^{''})$ amounts to requiring that $\ce^{''}\rightarrow\ce\rightarrow\cq_{f}$ and $\ce^{''}\rightarrow\cq_{g}$ are the same quotient of $\ce^{''}$, while the morphism of sheaves $\ce^{'}\rightarrow\ce\rightarrow\cq_{g}$ can be arbitrary. It thus follows that the map
\begin{equation}\label{eq:relfunctor}
h_{\V / \Quot^{l}_{\S}(\ce^{''})}(\T) \rightarrow \Hom_{\S\times\T}(\ce^{'}, \cq_{g}).
\end{equation}
taking $f$ to $\ce^{'}\rightarrow\cq_{g}$ is bijective. The right hand side defines a functor on the category of schemes over $\Quot^{l}_{\S}(\ce^{''})$ (given on morphisms by pullback), and (\ref{eq:relfunctor}) can be viewed as a functorial isomorphism. Consider now the vector bundle $\V(\ce^{'\vee[l]})$ defined by the locally free sheaf $\ce^{'\vee[l]}$ on $\Quot^{l}_{\S}(\ce^{''})$, and recall that 
\begin{equation*}
h_{\V(\ce^{'\vee[l]}) / \Quot^{l}_{\S}(\ce^{''})}(\T) = \H^{0}(\T, g^{\ast}\ce^{'\vee[l]}).
\end{equation*}
We now describe a functorial isomorphism
\begin{equation*}
\Phi: h_{\V(\ce^{'\vee[l]}) / \Quot^{l}_{\S}(\ce^{''})} \rightarrow h_{\V / \Quot^{l}_{\S}(\ce^{''})}.
\end{equation*}
In view of (\ref{eq:relfunctor}), it suffices to observe that \Cref{basechange} implies the existence of a canonical isomorphism
\begin{equation*}
\Phi(\T): \H^{0}(\T, g^{\ast}\ce^{'\vee[l]})\rightarrow\Hom_{\S\times\T}(\ce^{'}, \cq_{g})
\end{equation*}
for every scheme $\T$ over $\Quot^{l}_{\S}(\ce^{''})$. By \Cref{basechange}, we have
\begin{equation*}
\phi^{\ast}\ce^{'\vee[l]}=\ce^{'\vee[l]}\vert_{\V},
\end{equation*}
and it is clear from (\ref{eq:relfunctor}) and the definition of $\Phi$ that $s^{[l]}\vert_{\V}$ is the universal section associated to $\V(\ce^{'\vee[l]})$. This in particular establishes the regularity of $s^{[l]}$. As $s^{[l]}$ is regular, the embedding $\iota$ is regular, and a standard construction in intersection theory \cite{Ful} yields a Gysin map
\begin{equation*}
\iota^{\ast}:\A_{\ast}(\Quot^l_{\S}(\ce))\rightarrow\A_{\ast}(\Quot^l_{\S}(\ce^{''}))
\end{equation*}
satisfying $\iota^{\ast}[\Quot^l_{\S}(\ce)]=[\Quot^l_{\S}(\ce^{''})]$ and
\begin{equation}\label{eq:gysin}
\iota_{\ast}\circ\iota^{\ast}=e(\ce^{'\vee[l]})\cap(-).
\end{equation}
Finally, we consider the trivial quotient $\ce\rightarrow\ce^{''}=0$, so that $s$ is the identity map of $\ce$. At any given point $q$ of $\Quot^{l}_{\S}(\ce)$, the value $s^{[l]}(q)\in\Hom(\ce, \cq_{q})$ is the quotient $\ce\rightarrow\cq_{q}$ corresponding to $q$. In particular, $s^{[l]}$ is a nowhere vanishing section of $\ce^{\vee[l]}$.
\end{proof}
We should observe that since the codimension of $\Quot^{l}_{\S}(\ce^{''})$ in $\Quot^{l}_{\S}(\ce)$ is equal to the rank of $\ce^{\vee'[l]}$,  the regularity of $s^{[l]}$ is also a consequence of \Cref{singularities}. Our proof shows that
the identification $\Z(s^{[l]})=\Quot^{l}_{\S}(\ce^{''})$ and the regularity of $s^{[l]}$ hold for any smooth projective scheme $\S$ of dimension $d$; however, the equality
\begin{equation*}
\iota^{\ast}[\Quot^l_{\S}(\ce)]=[\Quot^l_{\S}(\ce^{''})]
\end{equation*}
requires $\Quot^l_{\S}(\ce)$ to be of pure dimension. If $d\geqslant 3$, this fails for $l\gg 0$, but holds for small $l$ \cite{JoS}. A direct consequence of the regularity of $s^{[l]}$ is the following result (parallel to \Cref{smoothI}) for the Cohen-Macaulay and Gorenstein loci, $\Quot^{l}_{\S}(\ce)^{\mathrm{CM}}$ and $\Quot^{l}_{\S}(\ce)^{\mathrm{G}}$, respectively.
\begin{corollary}
For any locally free quotient $\ce\rightarrow\ce^{''}$, we have
\begin{align*}
\Quot^{l}_{\S}(\ce^{''})^{\mathrm{CM}} &=\Quot^{l}_{\S}(\ce^{''})\cap\Quot^{l}_{\S}(\ce)^{\mathrm{CM}}, \\
\Quot^{l}_{\S}(\ce^{''})^{\mathrm{G}} &=\Quot^{l}_{\S}(\ce^{''})\cap\Quot^{l}_{\S}(\ce)^{\mathrm{G}}.
\end{align*}
\end{corollary}

\begin{remark}
Taking $l=1$ and identifying $\Quot^{1}_{\S}(\ce) = \bp(\ce)$ as in \Cref{lequals1}, we have $\ce^{\vee[1]} = \ce^{\vee}(1)$ and the relation
$e(\ce^{\vee[1]})=0$ reduces to the relation
\begin{equation*}
\sum_{k=0}^{r}(-1)^{r-k}c_{1}(\co(1))^{k}c_{r-k}(\ce)=0,
\end{equation*}
which was first considered by Grothendieck \cite{Gro2}.
\end{remark}

\begin{remark}
Consider a commutative diagram of locally free quotients
\begin{equation*}
\begin{tikzcd}
\ce \arrow[r] \arrow[d] & \ce^{''} \arrow[d]   \\
\cf \arrow[r] & \cf^{''},
\end{tikzcd}
\end{equation*}
and write $\cd$ and $\cd^{''}$ for the kernels of $\ce\rightarrow\cf$ and $\ce^{''}\rightarrow\cf^{''}$, respectively. By applying \Cref{relations} to the induced commutative diagram of embeddings of Quot schemes
\begin{equation*}
		\begin{tikzcd}
			\Quot^{l}_{\S}(\cf^{''}) \arrow[r] \arrow[d] & \Quot^{l}_{\S}(\cf) \arrow[d]   \\
			\Quot^{l}_{\S}(\ce^{''}) \arrow[r] & \Quot^{l}_{\S}(\ce),
		\end{tikzcd}
\end{equation*}
we obtain the commutation relation
\begin{equation*}
e(\cd^{\vee[l]})\cap e(\cf^{'\vee[l]})\cap[\Quot^{l}_{\S}(\ce)] = e(\ce^{'\vee[l]})\cap e(\cd^{''\vee[l]})\cap[\Quot^{l}_{\S}(\ce)].
\end{equation*}
\end{remark}

We also have the following virtual analogue of \Cref{relations}.

\begin{theorem}\label{virclass}
For any locally free quotient $\ce\rightarrow\ce^{''}$ with kernel $\ce^{'}$, we have
\begin{equation*}
\iota_\ast[\Quot^{l}_{\S}(\ce^{''})]^{\vir}=e(\ce^{'\vee[l]})\cap[\Quot^{l}_{\S}(\ce)]^{\vir}.
\end{equation*}
\end{theorem}

\begin{proof}
By the property (\ref{eq:gysin}) of the Gysin map $\iota^{\ast}$ , it suffices to show that
\begin{equation*}
\iota^{\ast}[\Quot^l_{\S}(\ce)]^{\vir}=[\Quot^l_{\S}(\ce^{''})]^{\vir}.
\end{equation*}
We will use the Siebert formula
\begin{equation*}
[\Quot^l_{\S}(\ce)]^{\vir} = \left\lbrace s(\T^{\vir}_{\Quot^l_{\S}(\ce)})\cap c_{\F}(\Quot^l_{\S}(\ce))\right\rbrace_{lr}.
\end{equation*}
As in (\ref{eq:comm2}), using the vanishing of higher derived pullbacks (\ref{eq:vanishing}), $\iota$ induces a morphism of exact sequences
\begin{equation*}
	\begin{tikzcd}
	 0 \arrow[r] & (1\times\iota)^{\ast}\cs \arrow[r] \arrow[d] &	\ce \arrow[r] \arrow[d] & (1\times\iota)^{\ast}\cq \arrow[d, "\isom"] \arrow[r] & 0 \\
	0 \arrow[r] & \cs^{''} \arrow[r] &	\ce^{''} \arrow[r] & \cq^{''} \arrow[r] & 0
	\end{tikzcd}
\end{equation*}
on $\S\times\Quot^{l}_{\S}(\ce^{''})$, where the bottom row is given by the universal quotient of $\Quot^{l}_{\S}(\ce^{''})$. Since the kernel of the middle vertical map is $\ce^{'}$, we obtain an exact triangle
\begin{equation*}
(1\times\iota)^\ast\cs\rightarrow\cs^{''}\rightarrow \ce^{'}[1]\rightarrow (1\times\iota)^{\ast}\cs[1].
\end{equation*}
Applying the functor $\R\SHom_{\pi^{''}}(-,\cq^{''})$, where $\pi^{''}:\S\times\Quot^{l}_{\S}(\ce^{''})\rightarrow \Quot^{l}_{\S}(\ce^{''})$ is the projection, yields an exact triangle of the form
\begin{equation}\label{eq:compatibility}
\T^{\vir}_{\Quot^l_{\S}(\ce^{''})}\rightarrow\L \iota^{\ast}\T^{\vir}_{\Quot^l_{\S}(\ce)}\rightarrow\ce^{'\vee[l]}\rightarrow\T^{\vir}_{\Quot^l_{\S}(\ce^{''})}[1].
\end{equation}
We now prove the equality
\begin{equation}\label{eq:fulton}
\iota^{\ast}c_{\F}(\Quot^{l}_{\S}(\ce)) = c(\ce^{'\vee[l]})\cap c_{\F}(\Quot^{l}_{\S}(\ce^{''}))
\end{equation}
Using \Cref{tautcomp} (i), it suffices to show that for a smooth scheme $\A$ containing $\Quot^{l}_{\S}(\ce)$
\begin{equation*}
\iota^{\ast}s(\C_{\Quot^{l}_{\S}(\ce)/\A}) = c(\ce^{'\vee[l]})\cap s(\C_{\Quot^{l}_{\S}(\ce^{''})/\A}).
\end{equation*}
For the latter equality, we consider the sequence of normal cones
\begin{equation}\label{eq:normalcones}
\C_{\Quot^{l}_{\S}(\ce^{''})/\Quot^{l}_{\S}(\ce)}\rightarrow\C_{\Quot^{l}_{\S}(\ce^{''})/\A}\rightarrow\C_{\Quot^{l}_{\S}(\ce)/\A}|_{\Quot^{l}_{\S}(\ce^{''})}.
\end{equation}
and show that it is exact in the sense of Example 4.1.6 of \cite{Ful}. By \Cref{relations}, the cone $\C_{\Quot^{l}_{\S}(\ce^{''})/\Quot^{l}_{\S}(\ce)}$ is the vector bundle $\V(\ce^{'\vee[l]})$. Since the exactness of (\ref{eq:normalcones}) can be verified étale locally on $\Quot^{l}_{\S}(\ce^{''})$, we may assume, as in the proof of \Cref{relations}, that $\ce=\ce^{'}\oplus\ce^{''}$ and check the exactness on the open subscheme $\V$. Using the identification of $\V$ with the vector bundle associated to $\ce^{'\vee[l]}$, one can, after passing to a open cover on which $\ce^{'\vee[l]}$ is trivial, reduce to proving the exactness of the sequence of normal cones associated to $\U\subset\U\times\ba^{n}\subset\A=\A^{'}\times\ba^{n}$, where $\A^{'}$ is a smooth scheme containing $\U$, and $n$ is the rank of $\ce^{'\vee[l]}$. By the proof of Theorem 6.5 of \cite{Ful}, in this situation one has a canonical isomorphism
\begin{equation*}
\C_{\U/\A}\xrightarrow{\sim}\ba^{n}_{\U}\times_{\U}\C_{\U/\A^{'}},
\end{equation*}
as well as
\begin{equation*}
\C_{\U/\U\times\ba^{n}}\xrightarrow{\sim}\ba^{n}_{\U}, \quad \mathrm{and} \quad \C_{\U\times\ba^{n}/\A}\vert_{\U}\xrightarrow{\sim}\C_{\U/\A^{'}},
\end{equation*}
which establish the exactness in this case. Combining (\ref{eq:compatibility}) and (\ref{eq:fulton}) with Siebert's formula,
\begin{align*}
\iota^{\ast}[\Quot^l_{\S}(\ce)]^{\vir} &=\left\{\iota^{\ast}s(\T^{\vir}_{\Quot^l_{\S}(\ce)})\cap \iota^{\ast}c_{\F}(\Quot^l_{\S}(\ce))\right\}_{r''l} \\
&=\left\{s(\T^{\vir}_{\Quot^l_{\S}(\ce^{''})})\cap s(\ce^{'\vee[l]})\cap c(\ce^{'\vee[l]})\cap c_{\F}(\Quot^l_{\S}(\ce^{''}))\right\}_{r''l} \\
&=\left\{s(\T^{\vir}_{\Quot^l_{\S}(\ce^{''})})\cap c_{\F}(\Quot^l_{\S}(\ce^{''}))\right\}_{r''l}=[\Quot^l_{\S}(\ce^{''})]^{\vir},
\end{align*}
where we have written $r^{''}$ for the rank of $\ce^{''}$.
\end{proof}
\subsection{Integrals}

A key problem in the intersection theory of $\Quot^l_{\S}(\ce)$ is to compute integrals of the form
\begin{equation}\label{eq:taut}
\int_{\Quot^l_{\S}(\ce)} P(\cf_{1}^{[l]}, \ldots, \cf_{m}^{[l]}),
\end{equation}
where $P(\cf_{1}^{[l]}, \ldots, \cf_{m}^{[l]})$ denotes any polynomial in the Chern classes of the tautological sheaves $\cf_{1}^{[l]}, \ldots, \cf_{m}^{[l]}$. These integrals are compatible with properties (i) and (ii) of \Cref{I-III} in the following sense:
\begin{proposition}\label{integrals}
Let $\cf_{1}, \ldots, \cf_{m}$ be locally free sheaves on $\S$. \\
(i) For any locally free quotient $\ce\rightarrow\ce^{''}$ with kernel $\ce^{'}$, we have	
\begin{equation*}
\int_{\Quot^l_{\S}(\ce)} e(\ce^{'\vee[l]})P(\cf_{1}^{[l]}, \ldots, \cf_{m}^{[l]}) =\int_{\Quot^l_{\S}(\ce^{''})} P(\cf_{1}^{[l]}, \ldots, \cf_{m}^{[l]}).
\end{equation*}
(ii) For any invertible sheaf $\cl$ on $\S$, we have
\begin{equation*}
\int_{\Quot^l_{\S}(\ce\otimes\cl)} P(\cf_{1}^{[l]}, \ldots, \cf_{m}^{[l]}) = \int_{\Quot^l_{\S}(\ce)} P((\cf_{1}\otimes\cl)^{[l]}, \ldots, (\cf_{m}\otimes\cl)^{[l]}).
\end{equation*}
\end{proposition}

\begin{proof}
Part (i) follows from \Cref{relations} and \Cref{tautcomp} (i), while part (ii) is a consequence of \Cref{I-III} (ii) and \Cref{tautcomp} (ii).
\end{proof}
This result allows one to compute certain integrals over $\Quot^l_{\S}(\ce)$. Consider $\ce$ on $\S$ such that there exists a quotient $\ce\rightarrow\cl$, where $\cl$ is invertible. Denoting by $\ce^{'}$ the kernel of $\ce\rightarrow\cl$, we have
\begin{align*}
\int_{\Quot^l_{\S}(\ce)} e(\ce^{'\vee[l]})P(\cf_{1}^{[l]}, \ldots, \cf_{m}^{[l]}) &= \int_{\Quot^l_{\S}(\cl)} P(\cf_{1}^{[l]}, \ldots, \cf_{m}^{[l]}) \\
&= \int_{\S^{[l]}} P((\cf_{1}\otimes\cl)^{[l]}, \ldots, (\cf_{m}\otimes\cl)^{[l]})
\end{align*}
by (i) and (ii). Such integrals can therefore be computed in terms of integrals over $\S^{[l]}$.
\begin{example}
If $\cm$ is an invertible sheaf on a K3 surface $\S$, then
\begin{equation*}
\int_{\Quot^l_{\S}(\ce)} e(\ce^{'\vee[l]})c_{1}(\cm^{[l]})^{2l} = \frac{(2l)!}{l!2^{l}}\left(c_{1}(\cl\otimes\cm)^{2} + 2(1-l) \right)^{l}
\end{equation*}
using the results of Beauville \cite{Bea}.
\end{example}

It should be observed that to compute an integral of the form (\ref{eq:taut}), one can by (ii) assume that $\ce$ is globally generated and then use (i) to reduce to the case $\ce=\co^{\oplus r}$. To compute the integrals (\ref{eq:taut}) in this case, it would be very important to prove their compatibility with property (iii) of \Cref{I-III}. This would come down to proving a localization theorem for the singular scheme $\Quot^{l}_{\S}(\co^{\oplus r})$, allowing one to express integrals over $\Quot^{l}_{\S}(\co^{\oplus r})$ as sums of integrals over products of Hilbert schemes of points (using the description of the fixed locus given in \cite{Bif}). In the same vein, it would be crucial to obtain a structure (universality) result for the integrals (\ref{eq:taut}), generalizing the result of \cite{EGL} in the $r=1$ case, expressing (\ref{eq:taut}) as a universal polynomial in the intersection numbers given by combinations of the Chern classes of $\ce\otimes\cf_{1}, \ldots, \ce\otimes\cf_{m}$ and the Chern classes of $\S$. In the context of integrals over the virtual class $[\Quot^{l}_{\S}(\ce)]^{\vir}$, these questions are much easier \cite{GrP, OP, Sta}.

\begin{remark}
As in the $r=1$ case, one should more generally consider integrals of the form (\ref{eq:taut}) with insertions given by MacPherson's Chern class \cite{Mac}.
\end{remark}

A notable special case of (\ref{eq:taut}) are the integrals
\begin{equation}\label{eq:Segre}
\int_{\Quot^l_{\S}(\ce)} s_{l(r+1)}(\cl^{[l]}),
\end{equation}
where $\cl$ is an invertible sheaf on $\S$. In the rank $r=1$ case, these were computed by Marian, Oprea, and Pandharipande \cite{MOP1, MOP2} (using a result of Voisin \cite{Voi}), resolving a long-standing conjecture of Lehn \cite{Leh}. The problem of extending this formula to $\Quot^l_{\S}(\ce)$ was mentioned in the recent paper of Oprea and Pandharipande \cite{OP}. For $l=1$ one readily finds, using \Cref{lequals1}, that (\ref{eq:Segre}) is equal to $(-1)^{r+1} s_{2}(\ce\otimes\cl)$. In the next section, we use \Cref{lequals2} to evaluate (\ref{eq:Segre}) in the first nontrivial (singular) case $l=2$.
\subsection{Higher rank Severi formula}

The following result is well-known; we prove it primarily because of the lack of a suitable reference.
\begin{proposition}\label{Rank1}
Let $\cl$ be an invertible sheaf on a smooth projective scheme $\X$. Then
\begin{equation*}
2\int_{\X^{[2]}}s_{2d}(\cl^{[2]}) = \left( \int_{\X} c_{1}(\cl)^{d} \right)^{2} -\sum_{k=0}^{d}\binom{2d+1}{d-k}\int_{\X} c_{1}(\cl)^{d-k}s_{k}(\X),
\end{equation*}
where $d$ is the dimension of $\X$, and $s(\X)$ the Segre class of its tangent sheaf.
\end{proposition}
\begin{proof}
Consider the diagram
\begin{equation}
\begin{tikzcd}
& \X^{[1, 2]} \arrow[rd, "\phi"] \arrow[ld, swap, "\psi"] &  \\
\X\times\X & & \X^{[2]}
\end{tikzcd}
\end{equation}
given by the flag Hilbert scheme. Here $\psi$ can be identified with the blow-up of $\X\times\X$ along the diagonal, and $\phi$ is a double covering. On $\X^{[1, 2]}$ we have an exact sequence of the form
\begin{equation}\label{eq:ses}
0\rightarrow\phi^{\ast}\cl^{[2]}\rightarrow\psi^{\ast}(\cl\boxplus\cl)\rightarrow\iota_{\ast}\cl\rightarrow 0,
\end{equation}
where $\iota$ is the inclusion of the exceptional divisor $\bp(\Omega^{1}_{\X})$ of $\psi$. We have
\begin{equation*}
c(\iota_{\ast} \cl)=1+\iota_{\ast} s(\cl(1))
\end{equation*}
by the Riemann-Roch theorem without denominators, and hence
\begin{equation*}
2\int_{\X^{[2]}} s(\cl^{[2]}) = \int_{\X^{[1, 2]}} s\left( \psi^{\ast}(\cl\boxplus\cl)\right)c(\iota_{\ast} \cl)
= \int_{\X\times\X} s\left(\cl\boxplus\cl\right)+\int_{\bp(\Omega^{1}_{\X})} s(\cl)^{2} s(\cl(1))
\end{equation*}
by (\ref{eq:ses}). The first integral is given by
\begin{equation*}
\int_{\X\times\X} s\left(\cl\boxplus\cl\right) = \left( \int_{\X} c_{1}(\cl)^{d}\right)^{2}.
\end{equation*}
As the degree $2d-1$ term of $s(\cl)^{2} s(\cl(1))$ is given by
\begin{equation*}
\left\{ s(\cl)^{2} s(\cl(1)) \right\}_{2d-1}
=-\sum_{k=0}^{2d-1} \binom{2d+1}{k+2} c_{1}(\cl)^{2d-1-k} c_{1}(\co(1))^k.
\end{equation*}
and since the pushforward map $\A_{\ast}(\bp(\Omega^{1}_{\X}))\rightarrow\A_{\ast}(\X)$ 
takes $c_{1}(\co(1))^k$ to $s_{k-d+1}(\X)$, we find
\begin{equation*}
\int_{\bp(\Omega^{1}_{\X})} s(\cl)^{2} s(\cl(1)) = -\sum_{k=0}^{d}\binom{2d+1}{d-k}\int_{\X} c_{1}(\cl)^{d-k}s_{k}(\X). \qedhere
\end{equation*}
\end{proof}
If we take $\X\subset\bp_{2d+1}$ and $\cl$ coming from restricting $\co(1)$, 
then the integral of $s_{2d}(\cl^{[2]})$ can be interpreted as the number of secants of $\X$ through a general point $p$ of $\bp_{2d+1}$, 
or equivalently the number of improper double points of the projection $\X\rightarrow\bp_{2d}$ associated to $p$. 
In this context, \Cref{Rank1} is due to Severi \cite{Sev}. When $\X=\S$ is a surface, \Cref{Rank1} reduces to 
\begin{equation*}
2\int_{\S^{[2]}} s_{4}(\cl^{[2]}) = s_{2}(\cl)^{2} - 10s_{2}(\cl) + 5s_{1}(\cl)s_{1}(\S) - s_{2}(\S).
\end{equation*}
We now extend the latter formula to $\Quot^{2}_{\S}(\ce)$ for a locally free sheaf $\ce$ of arbitrary rank $r$. Reflecting \Cref{integrals} (ii), the right-hand side of the higher rank formula depends on the rank $r$, the Segre classes of $\ce\otimes\cl$, as well as an additional term coming from the discriminant
\begin{equation*}
\Delta(\ce)=c_{2}(\SEnd(\ce)).
\end{equation*}

\begin{theorem}\label{higherseveri}
For any invertible sheaf $\cl$ on $\S$
\begin{align*}
2\int_{\Quot^{2}_{\S}(\ce)} s_{2r+2}(\cl^{[2]}) &= -\frac{r+2}{2}\Delta(\ce) +  s_{2}(\ce\otimes\cl)^{2}-(2r+3)(r+1)s_{2}(\ce\otimes\cl) \\
&+ \frac{1}{6}(2r+3)(r+2)(r+1)s_{1}(\ce\otimes\cl)s_{1}(\S)-\binom{r+3}{4}s_{2}(\S).
\end{align*}
\end{theorem}

\begin{proof}
\Cref{lequals2} (iii) implies the equality
\begin{equation*}
\int_{\Quot^{2}_{\S}(\ce)} s_{2r+2}(\cl^{[2]})=\int_{\bp(\ce)^{[2]}} s_{2r+2}(\cl(1)^{[2]})
\end{equation*}
and by \Cref{Rank1} we have to compute the integrals
\begin{equation}\label{eq:integrals}
\int_{\bp(\ce)} c_{1}(\cl(1))^{r+1-k} s_{k}(\bp(\ce)).
\end{equation}
Writing $\zeta=c_{1}(\co(1))$, the pushforward map $p_{\ast}:\A_{\ast}(\bp(\ce))\rightarrow\A_{\ast}(\S)$ satisfies
\begin{equation}
p_{\ast}\zeta^{r-1} = 1, \quad 	p_{\ast}\zeta^{r} = -s_{1}(\ce), \quad p_{\ast}\zeta^{r+1} = s_{2}(\ce),
\end{equation}
and $p_{\ast}\zeta^{k}=0$ for $k\leqslant r-2$. To determine (\ref{eq:integrals}) it therefore suffices to compute the coefficients of $\zeta^{r+1}, \zeta^{r}, \zeta^{r-1}$ in $c_{1}(\cl(1))^{r+1-k} s_{k}(\bp(\ce))$. We have
\begin{equation*}
c_{1}(\cl(1))^{r+1-k} = \zeta^{r+1-k} + (r+1-k)c_{1}(\cl)\zeta^{r-k} + \binom{r+1-k}{2}c_{1}(\cl)^{2}\zeta^{r-1-k} + \cdots.
\end{equation*}
Using the exact sequence of differentials associated to $p$ and the Euler sequence, we find
\begin{equation*}
s(\bp(\ce)) = s(\S)s(\Theta_{p}) = s(\S)s(\ce^{\vee}(1))
\end{equation*}
and in particular
\begin{gather*}
(-1)^{k}s_{k}(\bp(\ce)) = \binom{r-1+k}{k}\zeta^{k} + \left\lbrace \binom{r-1+k}{k-1}s_{1}(\ce) - \binom{r-2+k}{k-1}s_{1}(\S)\right\rbrace \zeta^{k-1} \\
+ \left\lbrace \binom{r-1+k}{k-2}s_{2}(\ce) - \binom{r-2+k}{k-2}s_{1}(\S)s_{1}(\ce) + \binom{r-3+k}{k-2}s_{2}(\S)\right\rbrace \zeta^{k-2} + \cdots.
\end{gather*}
Hence $(-1)^{k}c_{1}(\cl(1))^{r+1-k} s_{k}(\bp(\ce))$ is given by
\begin{gather*}
\left\lbrace \binom{r-1+k}{k} + \binom{r-1+k}{k-2}\right\rbrace s_{2}(\ce) -\binom{r-1+k}{k-1}s_{1}(\ce)^{2} + \binom{r-3+k}{k-2}s_{2}(\S) \\
+(r+1-k) \left\lbrace \binom{r-1+k}{k-1} - \binom{r-1+k}{k} \right\rbrace c_{1}(\cl)s_{1}(\ce) +\binom{r+1-k}{2}\binom{r-1+k}{k}c_{1}(\cl)^{2} \\
+ \left\lbrace \binom{r-2+k}{k-1}s_{1}(\ce) - \binom{r-2+k}{k-2}s_{1}(\ce) - (r+1-k)\binom{r-2+k}{k-1}c_{1}(\cl) \right\rbrace s_{1}(\S).
\end{gather*}
Using the identities
\begin{align*}
s_{2}(\ce\otimes\cl) &= s_{2}(\ce)-(r+1)s_{1}(\ce)c_{1}(\cl)+\binom{r+1}{2}c_{1}(\cl)^{2}, \\
\Delta(\ce) &=-2rs_{2}(\ce)+(r+1)s_{1}(\ce)^{2},
\end{align*} 
we find that the integrals (\ref{eq:integrals}) are given by
\begin{equation*}
\int_{\bp(\ce)} c_{1}(\cl(1))^{r+1-k} s_{k}(\bp(\ce)) = \alpha_{k}\Delta(\ce) + \beta_{k} s_{2}(\ce\otimes\cl) + \gamma_{k}s_{1}(\ce\otimes\cl)s_{1}(\S) + \delta_{k}s_{2}(\S),
\end{equation*}
where we have written
\begin{gather*}
\alpha_{k}  = (-1)^{k+1}\frac{(r-1+k)!}{(k-1)!(r+1)!}, \quad \beta_{k}= (-1)^{k} (r+1-k)(r-k)\frac{(r-1+k)!}{k!(r+1)!}, \\
\gamma_{k} = (-1)^{k} (r+1-k)\frac{(r-2+k)!}{r!(k-1)!}, \quad \delta_{k}= (-1)^{k} \binom{r-3+k}{k-2}.
\end{gather*}
It remains to evaluate the binomial sums arising from \Cref{Rank1}. We find
\begin{align*}
\sum_{k=0}^{r+1} \binom{2r+3}{r+1-k}\alpha_{k} &= \frac{r+2}{2}, \\
\sum_{k=0}^{r+1} \binom{2r+3}{r+1-k} \beta_{k} &= (2r+3)(r+1), \\
\sum_{k=0}^{r+1} \binom{2r+3}{r+1-k}\gamma_{k} &= -\frac{1}{6}(2r+3)(r+2)(r+1), \\
\sum_{k=0}^{r+1} \binom{2r+3}{r+1-k}\delta_{k} &= \binom{r+3}{4}.
\end{align*}
One can prove these identities by identifying the sums as hypergeometric sums, 
or, more elementarily, as convolution sums arising from the expansion of $(1+z)^{a}(1+z)^{b} = (1+z)^{a+b}$ for some $a, b$.
\end{proof}

\section{Cohomology}\label{sec:cohom}

\subsection{Structure sheaf}

\Cref{singularities} allows us to compute the cohomology of the structure sheaf of the Quot scheme $\Quot^{l}_{\S}(\ce)$; the result is, perhaps surprisingly, independent of $\ce$.
\begin{theorem}\label{coh}
If the singularities of $\Quot^{l}_{\S}(\ce)$ are rational, then
\begin{equation*}
\H^{\ast}(\Quot^{l}_{\S}(\ce), \co_{\Quot^{l}_{\S}(\ce)})\xrightarrow{\sim}\S^{l}\H^{\ast}(\S, \co_{\S}).
\end{equation*}
\end{theorem}

\begin{proof}
By \Cref{basic} (iii), $\Quot^{l}_{\S}(\ce)$ is birational to $\bp(\ce)$; let $\Gamma$ be the closure of the graph of the corresponding birational map. 
By taking a resolution of singularities $\tilde{\Gamma}\rightarrow\Gamma$, we obtain a roof diagram
\begin{equation*}
	\begin{tikzcd}
		& \tilde{\Gamma} \arrow[rd] \arrow[ld] &  \\
		\Quot^{l}_{\S}(\ce) & & \bp(\ce)^{(l)}
	\end{tikzcd}
\end{equation*}
of proper birational morphisms. Since $\bp(\ce)^{(l)}$ has rational singularities, $\tilde{\Gamma}\rightarrow\bp(\ce)^{(l)}$ induces an isomorphism on cohomology
\begin{equation*}
\H^{\ast}(\tilde{\Gamma}, \co_{\tilde{\Gamma}})\xrightarrow{\sim} \H^{\ast}(\bp(\ce)^{(l)}, \co_{\bp(\ce)^{(l)}}).
\end{equation*}
As we assume that $\Quot^{l}_{\S}(\ce)$ also has rational singularities, the morphism $\tilde{\Gamma}\rightarrow\Quot^{l}_{\S}(\ce)$ induces
\begin{equation*}
\H^{\ast}(\tilde{\Gamma}, \co_{\tilde{\Gamma}})\xrightarrow{\sim} \H^{\ast}(\Quot^{l}_{\S}(\ce), \co_{\Quot^{l}_{\S}(\ce)}).
\end{equation*}
Finally, using the projection $\bp(\ce)\rightarrow\S$, we have
\begin{equation*}
\H^{\ast}(\bp(\ce)^{(l)}, \co_{\bp(\ce)^{(l)}})\xrightarrow{\sim}\H^{\ast}(\bp(\ce)^{l}, \co_{\bp(\ce)^{l}})^{\mathfrak{S}_{l}}
\xrightarrow{\sim}\S^{l}\H^{\ast}(\bp(\ce), \co_{\bp(\ce)})\xrightarrow{\sim}\S^{l}\H^{\ast}(\S, \co_{\S}). \qedhere
\end{equation*}
\end{proof}

\begin{corollary}\label{coh2}
We have $\H^{\ast}(\Quot^{2}_{\S}(\ce), \co_{\Quot^{2}_{\S}(\ce)})\xrightarrow{\sim}\S^{2}\H^{\ast}(\S, \co_{\S})$.
\end{corollary}
\begin{proof}
This follows from \Cref{ratsing} and \Cref{coh}.
\end{proof}
\subsection{Tautological sheaves}

As for the cohomology of tautological sheaves, we propose the following conjecture.
\begin{conjecture}\label{tautcoh}
For any locally free sheaf $\cf$ on $\S$,
\begin{equation*}
 \H^{\ast}(\Quot^{l}_{\S}(\ce), \cf^{[l]})\xrightarrow{\sim}\H^{\ast}(\S, \ce\otimes\cf)\otimes\S^{l-1}\H^{\ast}(\S, \co_{\S}).
\end{equation*}
\end{conjecture}
This conjecture naturally falls into two parts. On the one hand, an isomorphism
\begin{equation}\label{eq:tautcoh1}
\H^{\ast}(\Quot^{l}_{\S}(\ce), \cf^{[l]})\xrightarrow{\sim}\H^{\ast}(\S, \ce\otimes\cf)\otimes\H^{\ast}(\Quot^{l-1}_{\S}(\ce), \co_{\Quot^{l-1}_{\S}(\ce)}),
\end{equation}
which naturally suggests a flag Quot scheme (\ref{eq:flag}) approach, and on the other hand an isomorphism
\begin{equation}\label{eq:tautcoh2}
\H^{\ast}(\Quot^{l-1}_{\S}(\ce), \co_{\Quot^{l-1}_{\S}(\ce)})\xrightarrow{\sim}\S^{l-1}\H^{\ast}(\S, \co_{\S}).
\end{equation}
The latter is covered by \Cref{coh}. The $l=1$ case is an immediate consequence of \Cref{lequals1} (iii). We now show that \Cref{tautcoh} holds in the first nontrivial case $l=2$. 
\begin{theorem}\label{tautcoh2}
For any locally free sheaf $\cf$ on $\S$,
\begin{equation*}
\H^{\ast}(\Quot^{2}_{\S}(\ce), \cf^{[2]})\xrightarrow{\sim}\H^{\ast}(\S, \ce\otimes\cf)\otimes\H^{\ast}(\S, \co_{\S}).
\end{equation*}
\end{theorem}

\begin{proof}
\Cref{lequals2} implies
\begin{equation}
\H^{\ast}(\Quot^{2}_{\S}(\ce), \cf^{[2]})\xrightarrow{\sim}\H^{\ast}(\bp(\ce)^{[2]}, \cf(1)^{[2]}).
\end{equation}
To compute $\H^{\ast}(\bp(\ce)^{[2]}, \cf(1)^{[2]})$, we consider the flag Hilbert scheme
\begin{equation*}
	\begin{tikzcd}
		\bp(\ce)^{[1,2]} \arrow[r, "f_{2}"] \arrow[d, "f_{1}", swap] & \bp(\ce)\times\bp(\ce)^{[2]} \arrow[d, "g_{2}"]   \\
		\bp(\ce)\times\bp(\ce) \arrow[r, "g_{1}", swap] & \bp(\ce)
	\end{tikzcd}
\end{equation*}
where $f_{1}$ can be identified with the blow up of $\bp(\ce)\times\bp(\ce)$ along the diagonal, $f_{2}$ is an isomorphism onto the universal subscheme $\cz\subset\bp(\ce)\times\bp(\ce)^{[2]}$, and $g_{1}$ is the second projection. In particular, we have
\begin{equation*}
\H^{\ast}(\bp(\ce)^{[1,2]}, f_{1}^{\ast} g_{1}^{\ast}\cf(1)) = \H^{\ast}(\bp(\ce)^{[1,2]}, f_{2}^{\ast} g_{2}^{\ast}\cf(1)).
\end{equation*}
On the one hand, 
\begin{equation*}
\H^{\ast}(\bp(\ce)^{[1,2]}, f_{2}^{\ast} g_{2}^{\ast}\cf(1)) \xrightarrow{\sim} \H^{\ast}(\bp(\ce)\times\bp(\ce)^{[2]}, \co_{\cz}\otimes g_{2}^{\ast}\cf(1)) \xrightarrow{\sim} \H^{\ast}(\bp(\ce)^{[2]}, \cf(1)^{[2]}).
\end{equation*}
On the other hand, we have an isomorphism
\begin{equation*}
\H^{\ast}(\bp(\ce)^{[1,2]}, f_{1}^{\ast} g_{1}^{\ast}\cf(1)) \xrightarrow{\sim} \H^{\ast}(\bp(\ce)\times\bp(\ce), g_{1}^{\ast}\cf(1)),
\end{equation*}
and the latter cohomology group is in turn isomorphic to 
\begin{equation*}
\H^{\ast}(\bp(\ce), \cf(1))\otimes\H^{\ast}(\bp(\ce), \co_{\bp(\ce)}) \xrightarrow{\sim} \H^{\ast}(\S, \ce\otimes\cf)\otimes\H^{\ast}(\S, \co_{\S}). \qedhere
\end{equation*}
\end{proof}

\begin{remark}\label{euler}
\Cref{tautcoh} implies
\begin{equation*}
\chi(\Quot^{l}_{\S}(\ce), \cf^{[l]}) = \chi(\S, \ce\otimes\cf)\chi(\Quot^{l-1}_{\S}(\ce), \co_{\Quot^{l-1}_{\S}(\ce)}),
\end{equation*}
where $\chi(\Quot^{l-1}_{\S}(\ce), \co_{\Quot^{l-1}_{\S}(\ce)}) = \chi(\S^{[l-1]}, \co_{\S^{[l-1]}})$ is independent of $\ce$; in particular, the Euler characteristic $\chi(\Quot^{l}_{\S}(\ce), \cf^{[l]})$ is additive in $\ce$.
\end{remark}

\subsection{Further questions}

\Cref{tautcoh} is of course merely a special case of a much wider problem, 
which is to prove a Borel-Bott-Weil theorem for $\Quot^{l}_{\S}(\ce)$. 
Denote by $\Sigma^{\lambda}$ the Schur functor associated to a partition $\lambda$, 
so that in particular $\Sigma^{(n)}$ and $\Sigma^{(1^{n})}$ are the $n$-th symmetric and exterior powers, respectively. 
The problem is then to compute the cohomology groups
\begin{equation*}
\H^{\ast}(\Quot^{l}_{\S}(\ce), \otimes_{j=1}^{n}\Sigma^{\lambda_{j}}\cf_{j}^{[l]}),
\end{equation*}
where $\lambda_{1}, \ldots, \lambda_{n}$ are partitions and $\cf_{1}, \ldots, \cf_{n}$ are locally free sheaves on $\S$. 
Using \Cref{relations}, we show that these cohomology groups are compatible with properties (i) and (ii) of \Cref{I-III}.

\begin{proposition}\label{spectrals}
(i) For any locally free quotient $\ce\rightarrow\ce^{''}$ with kernel $\ce^{'}$, there is a fourth quadrant spectral sequence
\begin{equation*}
\E^{pq}_{1}=\H^{q}(\Quot^{l}_{\S}(\ce), \otimes_{j=1}^{n}\Sigma^{\lambda_{j}}\cf_{j}^{[l]} \otimes \Lambda^{-p}(\ce^{'\vee[l]})^{\vee})\Rightarrow\H^{p+q}(\Quot^{l}_{\S}(\ce^{''}), \otimes_{j=1}^{n}\Sigma^{\lambda_{j}}\cf_{j}^{[l]}).
\end{equation*}
(ii) For any invertible sheaf $\cl$ on $\S$, there is an isomorphism
\begin{equation*}
\H^{\ast}(\Quot^{l}_{\S}(\ce\otimes\cl), \otimes_{j=1}^{n}\Sigma^{\lambda_{j}}\cf_{j}^{[l]}) \xrightarrow{\sim} \H^{\ast}(\Quot^{l}_{\S}(\ce), \otimes_{j=1}^{n}\Sigma^{\lambda_{j}}(\cf_{j}\otimes\cl)^{[l]}).
\end{equation*}
\end{proposition}

\begin{proof}
(i) By \Cref{relations}, the Koszul complex
	\begin{equation*}
		\cdots\rightarrow \Lambda^{2}(\ce^{'\vee[l]})^{\vee}\rightarrow(\ce^{'\vee[l]})^{\vee}\rightarrow\co\rightarrow \co_{\Quot^{l}_{\S}(\ce^{''})}\rightarrow 0
	\end{equation*}
is exact. Tensoring with $\otimes_{j=1}^{n}\Sigma^{\lambda_{j}}\cf_{j}^{[l]}$, we obtain a resolution of $\otimes_{j=1}^{n}\Sigma^{\lambda_{j}}\cf_{j}^{[l]}\vert_{\Quot^{l}_{\S}(\ce^{''})}$; it remains to take the associated hypercohomology spectral sequence.
	
(ii) This is a direct consequence of \Cref{tautcomp} (ii).
\end{proof}
Part (i) implies the equality of Euler characteristics 
\begin{equation*}
\chi(\Quot^{l}_{\S}(\ce), \Sigma^{\lambda}\cf^{[l]}) = \chi(\Quot^{l}_{\S}(\ce^{''}), \Sigma^{\lambda}\cf^{[l]}) - \sum_{j\geqslant 1} (-1)^{j} \chi(\Quot^{l}_{\S}(\ce), \Sigma^{\lambda}\cf^{[l]} \otimes \Lambda^{j}(\ce^{'\vee[l]})^{\vee}).
\end{equation*}
for any partition $\lambda$. In the simplest case $\lambda=(1)$, combining this with \Cref{euler} yields
\begin{equation*}
-\sum_{j\geqslant 1} (-1)^{j} \chi(\Quot^{l}_{\S}(\ce), \cf^{[l]} \otimes \Lambda^{j}(\ce^{'\vee[l]})^{\vee}) = \chi(\Quot^{l}_{\S}(\ce^{'}), \cf^{[l]}).
\end{equation*}

\textbf{Acknowledgments.} The author is indebted to N. Arbesfeld, T. Schedler, B. Szendr\ho oi, 
and especially R. Thomas for several conversations and suggestions. 
He is also grateful to M. Artin, V. Ginzburg, and M. Hochster for helpful correspondence on commuting schemes. 
The results presented in this paper were found when the author was a research student at Imperial College, 
funded by Imperial College and the EPSRC [EP/L015234/1]; 
the exposition was amended when the author was supported by the grant  ERC-2017-AdG-786580-MACI. 
The project received funding from the European Research Council (ERC) under the European Union Horizon 2020 
research and innovation program (grant agreement No 786580).

{\footnotesize

}

\end{document}